\documentclass[abstracton]{scrartcl}

\usepackage[utf8]{inputenc}
\usepackage[T1]{fontenc}
\usepackage[british]{babel}


\usepackage{geometry}

\usepackage{tikz}


\usepackage{paralist}

\usepackage[noadjust]{cite}
\usepackage{hyperref}
\usepackage{xcolor}
\hypersetup{
    colorlinks,
    linkcolor={blue!80!black},
    citecolor={blue!80!black},
    urlcolor={blue!80!black}
}
\usepackage{amsmath}
\usepackage{amssymb}
\usepackage{amsthm}
\usepackage{mathtools}
\usepackage{shuffle}
\usepackage{units}

\usepackage{lmodern}

\allowdisplaybreaks 

\newcommand{\R}{\mathbf{R}}

\newcommand{\Rd}{\mathbf{R}^d}

\newcommand{\Rn}{\mathbf{R}^n}

\newcommand{\bx}{\mathbf{x}}

\newcommand{\Zp}{\mathbf{N}_0}
\newcommand{\Zpp}{\mathbf{N}}

\newcommand{\M}{\mathcal{M}}

\newcommand{\Li}{\mathcal L}
\newcommand{\Tree}{\mathfrak {Tr}}

\newcommand{\tree}{\mathfrak t}
\newcommand{\stree}{\mathfrak s}

\newcommand{\Hi}{\mathcal{H}}

\newcommand{\id}{\mathrm{id}}
\newcommand{\eps}{\varepsilon}

\newcommand{\dd}{\mathrm d}

\newcommand{\Sym}{\mathrm{SG}}
\newcommand{\sym}{\mathrm{sym}}

\newcommand{\vertiii}[1]{{\left\vert\kern-0.25ex\left\vert\kern-0.25ex\left\vert #1 
    \right\vert\kern-0.25ex\right\vert\kern-0.25ex\right\vert}}
\newcommand{\partrans}[2]{_{#1}{/\kern-.7ex/_{\kern-.3ex #2}}}

\newcommand{\treeNodeRadius}{2pt}
\newcommand{\treeScale}{2}
\newcommand{\treeScaleSmall}{.7}

\newcommand{\tikzTree}[2]{
	\tikz[scale=\treeScale,baseline=#1,label distance=-.3em,line width=.8pt]{#2}
}

\newcommand{\tikzTreeSmall}[2]{
	\tikz[scale=\treeScaleSmall,baseline=#1,label distance=-.3em,line width=.8pt]{#2}
}

\newcommand{\addnodeLeft}[3]{
	\draw (#1){}+(-.7em,1.9ex) node[circle,fill,inner sep=\treeNodeRadius,line width=0,label=left:\footnotesize{$#3$}](#2){} -- (#1);
}
\newcommand{\addnodeMiddle}[3]{
	\draw (#1){}+(0,1.9ex) node[circle,fill,inner sep=\treeNodeRadius,line width=0,label=left:\footnotesize{$#3$}](#2){} -- (#1);
}
\newcommand{\addnodeRight}[3]{
	\draw (#1){}+(.7em,1.9ex) node[circle,fill,inner sep=\treeNodeRadius,line width=0,label=left:\footnotesize{$#3$}](#2){} -- (#1);
} 
\newcommand{\addnodeRightTwo}[3]{
	\draw (#1){}+(.35em,1.9ex) node[circle,fill,inner sep=\treeNodeRadius,line width=0,label=left:\footnotesize{$#3$}](#2){} -- (#1);
}
\newcommand{\addnodeLeftTwo}[3]{
	\draw (#1){}+(-.35em,1.9ex) node[circle,fill,inner sep=\treeNodeRadius,line width=0,label=left:\footnotesize{$#3$}](#2){} -- (#1);
}

\newcommand{\drawRoot}{
	\draw node[circle,fill,inner sep=\treeNodeRadius,line width=0](a){};
}
    
\newtheoremstyle{plaindotless}{\topsep}{\topsep}{\itshape}{}{\bfseries}{}{ }{}
\newtheoremstyle{definitiondotless}{\topsep}{\topsep}{}{}{\bfseries}{}{ }{}

\theoremstyle{definitiondotless}
\newtheorem{defn}{Definition}[section]

\newtheorem{exmp}[defn]{Example}
\newtheorem{rem}[defn]{Remark}

\theoremstyle{plaindotless}
\newtheorem{thm}[defn]{Theorem}
\newtheorem{prop}[defn]{Proposition}
\newtheorem{cor}[defn]{Corollary}
\newtheorem{lem}[defn]{Lemma}

\author{Thomas Cass\thanks{supported by EPSRC Grant EP/M00516X/1} \and Martin P. Weidner\thanks{funded by a mini-DTC scholarship from the Department of Mathematics, Imperial College London}}
\publishers{Imperial College London}
\date{}

\title{Tree algebras over topological vector spaces in rough path theory}


\begin{document}
	\maketitle
	
	\begin{abstract}
				
		We work with non-planar rooted trees which have a label set given by an arbitrary vector space $V$.  By equipping $V$ with a complete locally convex topology, we show how a natural topology is induced on the tree algebra over $V$. In this context, we introduce the Grossman-Larson and Connes-Kreimer topological Hopf algebras over $V$, and prove that they form a dual pair in a certain sense. As an application we define the class of branched rough paths over a general Banach space, and propose a new definition of a solution to a rough differential equation (RDE) driven by one of these branched rough paths. We show equivalence of our definition with a Davie-Friz-Victoir-type definition, a version of which is widely used for RDEs with geometric drivers, and we comment on applications to RDEs with manifold-valued solutions. 
		\begin{description}
			\item[Keywords:] Labelled trees, Hopf algebras, Rough differential equations
		\end{description}
	\end{abstract}
	
	\section*{Introduction}
	
A cornerstone of classical rough path theory is to understand differential
equations of the form%
\begin{equation}
\dd y_{t}=V\left(  y_{t}\right)  \dd\bx_{t}\text{ for }t\in\left[
0,T\right]  ,\text{started at }y_{0}\text{{}}\in F\text{, }\label{RDE1}%
\end{equation}
in which $\bx$ is a\emph{ geometric }$p$-\emph{rough path} on a
Banach space $E$, and $V$ is a continuous linear map from $E$ into the space
of vector fields on another Banach space $F$. A number of different approaches
to this definition exist. The first chronologically is due to Lyons
\cite{Lyons_1998} for whom a solution is defined as a fixed point, in geometric
rough path space, of the rough integral equation corresponding to
\eqref{RDE1}.~\ More recently, Gubinelli introduced an essentially equivalent
definition which characterises solutions in terms of \emph{controlled rough
paths}, cf. \cite{Gubinelli_2004,Gubinelli_2010} and \cite{Hairer_Kelly_2015}. In the finite-dimensional case $E=\Rd$ and $F=\Rn$, Davie \cite{Davie_2008} (for $2\leq p<3$) and then Friz and Victoir
\cite{Friz_Victoir_2010} (for general $p\geq2)$ proposed a notion of solution motivated by
the Euler approximation for ordinary differential equations. This definition
requires that $y$ and $\bx$ be related\footnote{The meaning of
	$\simeq$ in \eqref{davie} is made precise later, but it\ relates two terms
whose difference is negligible in a certain sense.} by  %

\begin{equation}
y_{t}-y_{s}\simeq\sum_{k=1}^{\left\lfloor p\right\rfloor }\sum_{i_{1}%
,\ldots,i_{k}=1}^{d}V_{i_{1}}\cdots V_{i_{k}}\text{id}\left(  y_{s}\right)
\bx_{s,t}^{k;i_{1},\ldots,i_{k}}\text{ for }s,t\in\left[  0,T\right]
,\label{davie}%
\end{equation}
where $\bx_{s,t}^{k;i_1,\ldots,i_k}:=\langle e_{i_1}\otimes\cdots\otimes e_{i_k},\bx_{s,t}\rangle$, and
$V_{i}\left(  y\right)  :=V\left(  y\right)  \left(  e_{i}\right)  $ when
$\left\{  e_{i}\right\}  _{i=1}^{d}$ denotes the standard basis of $\Rd$. Somewhat differently, and inspired by the work of Strichartz
\cite{Strichartz_1987}, Bailleul in \cite{Bailleul_2015} interprets a solution as the unique
\emph{rough flow} (a diffeomorphism on $F$) associated to a class
of approximate flows. This rough flow can be used to construct a path $y$
which satisfies
\begin{equation}
f\left(  y_{t}\right)  -f\left(  y_{s}\right)  \simeq\sum_{k=1}^{\left\lfloor
p\right\rfloor }\sum_{i_{1},...,i_{k}=1}^{d}V_{i_{1}}...V_{i_{k}}f\left(
y_{s}\right)  \bx_{s,t}^{k;i_{1},...,i_{k}}\text{ for }%
s,t\in\left[  0,T\right]  ,\label{bailleul}%
\end{equation}
for all smooth enough functions $f:F\rightarrow G$ and for any Banach space
$G$. See Proposition~3.5 of \cite{Bailleul_2015}.

In many situations \eqref{bailleul} is a more natural test than \eqref{davie} to
determine if a given path $y$ solves \eqref{RDE1}. For example, on any smooth
manifold \eqref{bailleul} still has meaning even
if the increment on the left side of \eqref{davie} does not. This observation has been used in \cite{Cass_Driver_Litterer_2015} in order to study rough differential equations on submanifolds of $\Rd$. One application
of this paper will be to show the equivalence of \eqref{davie} and
\eqref{bailleul}; this is currently unknown even for the special case sketched
above where $E$ and $F$ are finite-dimensional and for
geometric $\bx$. The basic challenge is algebraic: a general
smooth function ~$f$ can have non-zero derivatives of any order, while the
derivatives of the identity function of degree two and higher vanish
identically. The summands  in \eqref{bailleul} therefore typically involve many
more terms than those in \eqref{davie}, and to handle these we need to
systematically keep track of terms involving derivatives of $f$ of a fixed
degree. The~algebraic structure needed is exactly the Grossman-Larson Hopf
algebra on labelled rooted non-planar trees. These ideas, which orginally go
back to Cayley \cite{Cayley_1881}, appear in Butcher's seminal study of numerical methods
for differential equations \cite{Butcher_1972}. Connes and Kreimer \cite{Connes_Kreimer_1998} and Grossman and
Larson \cite{Grossman_Larson_1989} subsequently introduced two different Hopf algebra structures
based on rooted trees; these are dual to each another, cf. \cite{Panaite_2000} and \cite{Hoffman_2003},
allowing one to view the Butcher group either as the character group of the
Connes-Kreimer algebra, or as the group-like elements of the Grossman-Larson
algebra. Recently the connection between tree algebras, renormalisation theory and rough paths has led to new results such as the work by Bruned, Chevyrev, Friz and Prei\ss{}  \cite{Bruned_Chevyrev_Friz_Preiss_2017}.

Our analysis admits two important generalisations on the motivating discussion
above, namely:

\begin{enumerate}
\item It allows for \emph{branched rough path} drivers. These have been
introduced by Gubinelli and developed further by Hairer and Kelly. They
generalise the notion of a geometric rough path by removing the algebraic
constraints which come from the classical integration-by-parts relations. As
such, a wider class of driving signals can be accommodated, e.g. stochastic
rough paths constructed via iterated It\^{o} integration.

\item It permits $E$ and $F$ to be general Banach spaces. An key advantage of
classical rough path theory is its ability to treat models in which the signal
and response evolve in infinite dimensional state-spaces. This poses an extra
challenge to the algebraic methods, most of which work for trees with a finite
-- or at most countable -- label set. In this article, by constrast, we will
work with general locally convex topological vector spaces as label sets. 
\end{enumerate}

The eventual outcome of this paper will thus be to close two gaps in the
literature: we present the first rigorous treatment of infinite-dimensional
branched rough paths and then, in this general setting, we prove that
\eqref{bailleul} is equivalent to the classical Davie-Friz-Victoir notion of
solution to a rough differential equation.

	The structure of this work is as follows. In Section~\ref{sec:Trees} we define trees as partially ordered sets and introduce the properties and operations that are relevant in later sections. Section~\ref{sec:TreeAlgebras} contains the construction of algebras of labelled trees, where the label set is given by an arbitrary vector space $V$. If $V$ carries a topology, then this topology induces a topology on the tree algebras associated to $V$. This topology is introduced in Section~\ref{sec:Topology}. Once the algebraic and topological algebras of Grossman-Larson and Connes-Kreimer are introduced, we show in Section~\ref{sec:Duality} that they form a dual pair of Hopf algebras. Finally we apply this duality in the context of rough paths in Section~\ref{sec:RoughPaths}.

	\section{The set of non-planar rooted trees}\label{sec:Trees}
		There exist various equivalent definitions for the set of non-planar rooted trees and here we choose to follow \cite{Hoffman_2003} and work with equivalence classes of partially ordered sets. Two partially ordered sets are defined to be \emph{equivalent} if there exists an order preserving bijection between them. 
		
		\begin{defn}
			A \emph{Non-planar rooted tree} $\tree$ is an equivalence class of finite partially ordered sets $S$ such that $S$ has a unique minimal element and for every $v\in S$ the set $\{w\in S:w\prec v\}$ is totally ordered. The set of all (non-planar rooted) trees is denoted by $\Tree$.
		\end{defn}
		
		From now on, by a tree $\tree\in\Tree$  we mean a generic representative of the equivalence class and we will address the problem of well-definedness where we deem it necessary.	
	
		The elements of a tree $\tree\in\Tree$ are called \emph{vertices} and its unique minimal element is called its \emph{root} and it is denoted by $\mathrm r(\tree)$. By $\mathrm v(\tree):=\tree\backslash\mathrm r(\tree)$ we denote the set of all vertices of $\tree$ without the root. Finally we define $\lvert\tree\rvert$ to be the number of elements of $\mathrm v(\tree)$ and we call this number the \emph{degree} of $\tree$. There exists a unique $\tree\in\Tree$ with $\lvert\tree\rvert=0$ and we denote this tree by $\mathbf 1$. 	
	
	The \emph{symmetry group} $\Sym(\tree)$ of $\tree\in\Tree$ is the group of all order-preserving bijections from $\mathrm v(\tree)$ to $\mathrm v(\tree)$. 
	
	For $\tree\in\Tree$ and $v\in\tree$, denote by $\mathrm c(v)$ the set of \emph{children} of $v$, i.e. all vertices $w\in\tree$ such that $w\succ v$ and such that there exists no $x\in\tree$ with $w\succ x\succ v$. 
	
	Trees can be canonically interpreted as graphs where the vertices of the graph correspond to the vertices of the tree and the edges of the graph correspond to the statement `is a child of'.

	An important operation is that of \emph{grafting} two trees together. Let $\tree,\stree\in\Tree$ be two trees. A grafting map from $\tree$ to $\stree$ is a map $d:\mathrm c(\mathrm r(\tree))\rightarrow \stree$, i.e. it assigns a vertex of $\stree$ to every child of the root of $\tree$. Denote by $\mathrm{Gr}(\tree,\stree)$ the set of all such maps. Given $d\in\mathrm{Gr}(\tree,\stree)$ we define $\tree\stackrel d\rightharpoonup \stree:=\mathrm v(\tree)\sqcup \stree$ with the additional relation $v\succ d(v)$ for every $v\in\mathrm c(\mathrm r(\tree))$ (plus all necessary additional relations so that transitivity holds). Here $\sqcup$ denotes the disjoint union.

	 In the special case where $d(v)=\mathrm r(\stree)$ for all $v\in\mathrm c(\mathrm r(\tree))$ we write $\tree\circ\stree$ instead of $\tree\stackrel d\rightharpoonup\stree$. Observe that we have $\tree\circ\stree=\stree\circ\tree$, since the operation $\circ$ simply identifies the roots of $\tree$ and $\stree$. 
\begin{exmp}
	These are all possible ways of grafting $\tree=\tikzTree{7}{\drawRoot\addnodeLeftTwo{a}{b}{}\addnodeRightTwo{a}{c}{}}$ to $\stree=\tikzTree{7}{\drawRoot\addnodeMiddle{a}{b}{}}$:
\[
	\tikzTree{7}{\drawRoot\addnodeLeft{a}{b}{}\addnodeRight{a}{c}{}\addnodeMiddle{a}{d}{}},\qquad
	\tikzTree{7}{\drawRoot\addnodeLeftTwo{a}{b}{}\addnodeRightTwo{a}{c}{}\addnodeMiddle{c}{d}{}},\qquad
	\tikzTree{7}{\drawRoot\addnodeMiddle{a}{b}{}\addnodeRightTwo{b}{c}{}\addnodeLeftTwo{b}{d}{}}.
\]
Note however that $\lvert\mathrm{Gr}(\tree,\stree)\rvert=4$ since there are two possibilities to obtain the middle one of the above three trees. Furthermore the first tree corresponds to $\tree\circ\stree$.
\end{exmp}

Let $\tree\in\Tree$ be a tree. We call a subset $C\subset \mathrm v(\tree)$ of its vertices an \emph{admissible cut} if none of its elements are comparable, i.e. if for all $v,w\in C$ we have neither $v\prec w$ nor $w\prec v$. The set of all admissible cuts of $\tree$ is denoted by $\mathrm{Cut}(\tree)$. For a given cut $C\in\mathrm{Cut}(\tree)$ we define the trees 
	 \[
	 	R^C(\tree):=\tree\backslash\{v\in \mathrm v(\tree):\text{there exists } w\in C\text{ such that }w\preceq v\}
	 \]
	 and 
	 \[
	 	P^C(\tree):=\{r\}\sqcup\{v\in \mathrm v(\tree):\text{there exists } w\in C\text{ such that }w\preceq v\}
	 \]
	 with the additional relations $r\prec w$ for all $w\in C$ (plus all necessary additional relations so that transitivity holds). Intuitively, $R^C(\tree)$ contains the tree that remains after cutting away the elements of $C$ and $P^C(\tree)$ is obtained by grafting all the subtrees that have been removed to a new root.
	 \begin{exmp}
	 	The result of the cutting operation when $C$ is given by the highlighted vertices.
	 	\[
	 		\tree=\tikzTree{15}{\drawRoot\addnodeLeft{a}{b}{}\addnodeRight{a}{c}{}\addnodeMiddle{a}{d}{}\addnodeLeftTwo{b}{e}{}\addnodeRightTwo{b}{f}{}\addnodeMiddle{c}{g}{}
	 		\draw (b) node[circle,draw,inner sep=2*\treeNodeRadius]{};
	 		\draw (g) node[circle,draw,inner sep=2*\treeNodeRadius]{};}
	 		\qquad\longrightarrow\qquad
	 		P^C(\tree)=
	 		\tikzTree{15}{\drawRoot\addnodeLeftTwo{a}{b}{}\addnodeRightTwo{a}{c}{}\addnodeLeftTwo{b}{e}{}\addnodeRightTwo{b}{f}{}},\qquad
	 		R^C(\tree)=
	 		\tikzTree{7}{\drawRoot\addnodeLeftTwo{a}{b}{}\addnodeRightTwo{a}{c}{}}
	 	\]
	 \end{exmp}
	
	In order to keep track of the vertices of a tree $\tree\in\Tree$, we would like to consider bijections $\rho:\mathrm v(\tree)\rightarrow\{1,\ldots,\lvert\tree\rvert\}$. However, since we are working with equivalence classes of trees, we have to consider equivalence classes of such bijections. Thus we assume that every representative $T$ of $\tree$ comes with a bijective map $\rho_T:\mathrm v(T)\rightarrow\{1,\ldots,\lvert\tree\rvert\}$ which we call the \emph{enumeration of $T$}. Furthermore we assume that for any two representatives $T$ and $\tilde T$ of $\tree$ the enumerations $\rho_T$ and $\rho_{\tilde T}$ are equivalent in the sense that there exists an order preserving bijection $\varphi:T\rightarrow\tilde T$ with $\rho_T=\rho_{\tilde T}\circ\varphi$.
	
	For a tree $\tree\in\Tree$ we obtain a canonical embedding of $\Sym(\tree)$ into the symmetric group $\mathfrak S_{\lvert\tree\rvert}$ by its enumeration $\rho_\tree$, i.e. we define the group
	\[
		\{\rho_\tree\pi\rho_\tree^{-1}:\pi\in\Sym(\tree)\}\subseteq\mathfrak S_{\lvert\tree\rvert}.
	\]
	We will abuse notation and denote this embedding by $\Sym(\tree)$ as well.
	 
	 For $k\in\Zpp$ we denote by $\Tree_k\subset\Tree$ the set of all trees whose root has exactly $k$ children and by $\Tree^k\subset\Tree$ the set of all trees with degree $k$. We will also use the notation $\Tree^k_n:=\Tree^k\cap\Tree_n$. The set $\Tree_1$ is especially important as the following trivial yet important lemma shows.
	 
	 \begin{lem}\label{lem:decomposition}
	Let $\tree\in\Tree_k$. Then there exist unique trees  $\tree_1,\ldots,\tree_k\in\Tree_1$ such that we have the decomposition $\tree=\tree_1\circ\cdots\circ\tree_k$.
	\end{lem}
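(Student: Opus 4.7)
The plan is to decompose $\tree$ along the children of its root, producing the $\tree_i$ explicitly and then verifying both existence and uniqueness.

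For existence, fix a representative of $\tree$ and let $c_1,\ldots,c_k$ enumerate the children of $\mathrm r(\tree)$. For each $i$, define $\tree_i$ to be the equivalence class of the partially ordered set
\[
\{\mathrm r(\tree)\}\sqcup\{v\in\tree:v\succeq c_i\}
\]
endowed with the order induced from $\tree$. Each $\tree_i$ has $\mathrm r(\tree)$ as its unique minimum and $c_i$ as the unique child of that minimum, so $\tree_i\in\Tree_1$. Unpacking the definition of $\circ$ (which simply glues two trees along their roots and otherwise keeps the induced orders disjoint), one checks by induction on $k$ that $\tree_1\circ\cdots\circ\tree_k$ is exactly $\tree$: the common root is $\mathrm r(\tree)$, its children are $\{c_1,\ldots,c_k\}$, and the descendants of $c_i$ are inherited from $\tree_i$. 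That $\tree_i$ is independent (as an equivalence class) of the choice of representative follows from the fact that any order-preserving bijection between two representatives of $\tree$ restricts to such a bijection between the corresponding subtrees.

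For uniqueness, suppose $\tree=\stree_1\circ\cdots\circ\stree_k$ with $\stree_i\in\Tree_1$. The $\circ$-product identifies all roots to a single vertex whose children are precisely the unique children of $\mathrm r(\stree_i)$, $i=1,\ldots,k$. Hence there is a bijection $\sigma\in\mathfrak S_k$ under which the unique child of $\mathrm r(\stree_i)$ corresponds to $c_{\sigma(i)}$, and then the descendants of that child in $\stree_i$ must coincide with the descendants of $c_{\sigma(i)}$ in $\tree$. This forces $\stree_i=\tree_{\sigma(i)}$ as elements of $\Tree$, proving uniqueness up to permutation of the factors; since $\circ$ is commutative, this is the appropriate notion of uniqueness here.

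The only real subtlety is the bookkeeping forced by working with equivalence classes of posets rather than concrete trees: one must verify that the subtree construction, the identification of roots in $\circ$, and the matching of children in the uniqueness argument are all invariant under order-preserving bijections of representatives. This is routine but is the step most prone to hidden errors. Once that is handled, both existence and uniqueness fall out immediately from the definitions of $\circ$ and of $\Tree_1$.
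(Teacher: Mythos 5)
Your proof is correct. The paper states this lemma without proof, calling it ``trivial yet important'', and your argument --- splitting $\tree$ into the subtrees $\{\mathrm r(\tree)\}\cup\{v\in\tree:v\succeq c_i\}$ over the children $c_i$ of the root, and reading uniqueness as uniqueness of the multiset of factors (the only sensible reading, since $\circ$ is commutative and the $\tree_i$ need not be distinct) --- is exactly the intended one, with the equivalence-class bookkeeping handled appropriately.
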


	\section{Algebras of labelled trees over a vector space}
	\label{sec:TreeAlgebras}

		Let $M$ be a set. The most straightforward way to define a labelling of the vertices of a tree $\tree\in\Tree$ with labels from $M$ is to consider a map $l:\mathrm v(\tree)\rightarrow M$. More precisely, we have to consider equivalence classes of such maps where two maps $l$ and $\tilde l$ are equivalent if there exists $\pi\in\Sym(\tree)$ such that $l=\tilde l\pi$. We can then consider the (free) vector space spanned by all of those equivalence classes of labellings for a fixed tree $\tree\in\Tree$. Let us denote this vector space by $\Li_\tree(M)$.
		
		This definition is perfectly fine and especially if $M$ is finite there is not much difference between labelled and unlabelled trees since in that case the space $\Li_\tree(M)$ is finite dimensional for every $\tree\in\Tree$. However, if $M$ is infinite, then the vector space $\Li_\tree(M)$ is infinite dimensional for every $\tree\in\Tree\backslash\{\mathbf 1\}$. It is therefore desirable to equip $\Li_\tree(M)$ with a topology since topological vector spaces are usually nicer to work with than infinite dimensional vector spaces without a topology, for example when it comes to dual spaces. 
		
		Hence we would like to establish a different point of view which takes the (free) vector space $V$ spanned by $M$ as the label set rather than the set $M$ itself. In fact, it is usually $V$ that is given in the first place rather than $M$. Thus our aim is to describe the spaces $\Li_\tree(M)$ in terms of $V$ rather than $M$ so that we can then use a given topology on $V$ to construct a topology on $\Li_\tree(M)$. 
		
		In terms of the application to rough paths that we will present in Section~\ref{sec:RoughPaths} the vector space $V$ will take the role of the noise space. The topological tree algebra generated by it -- in the way given below -- will then be the space in which the rough path lift of the noise takes its values.		
		
		\subsection{Spaces of labelled trees as tensor products}
		Let now $V$ be a given vector space. We will perform a purely algebraic construction in this section and then consider a topological vector space in the following one. A permutation $\sigma\in\mathfrak S_k$ with $k\in\Zpp$ can be canonically interpreted as a linear map on $V^{\otimes k}$ which is given by $\sigma(v_1\otimes\cdots\otimes v_k)=v_{\sigma(1)}\otimes\cdots\otimes v_{\sigma(k)}$ for elementary tensors. Given a subgroup $G\subset\mathfrak S_k$  we define the projection
	\[
		\sym_G:V^{\otimes k}\rightarrow V^{\otimes k}:a\mapsto \frac1{\lvert G\rvert}\sum_{\sigma\in G} \sigma(a).
	\]

	\begin{defn}
		Let $V$ be a vector space, let $\tree\in\Tree$ be a tree and let $\Sym(\tree)$ be its symmetry group. Then the \emph{$\tree$-tensor power of $V$} is defined as the quotient space
		\[
			V^{\otimes\tree}:=V^{\otimes\lvert\tree\rvert}\left/\mathrm{ker}\left(\sym_{\Sym(\tree)}\right)\right.
		\]
	\end{defn}
	
	The space $V^{\otimes\tree}$ is isomorphic to the image of $\sym_{\Sym(\tree)}$, which provides a more direct  description of $V^{\otimes\tree}$. We say that an element $\mathbf t\in V^{\otimes\tree}$ is an  \emph{elementary labelled tree} if its equivalence class contains an elementary tensor. The set of all elementary labelled trees in $V^{\otimes\tree}$ spans $V^{\otimes\tree}$. 
	
	To illustrate an elementary labelled tree we choose one of its elementary representatives as in the following example. E.g., for\renewcommand{\treeNodeRadius}{1pt} $\tree=\kern-.5em\tikzTreeSmall{0}{\drawRoot\addnodeLeftTwo{a}{b}{}\addnodeRightTwo{a}{c}{}}$\renewcommand{\treeNodeRadius}{2pt} we can visualise $\mathbf t=[2v_1\otimes v_2+v_2\otimes v_1]\in V^{\otimes\tree}$ as 
	\[
		\mathbf t=3\kern-1em\tikzTree{5}{\drawRoot\addnodeLeftTwo{a}{b}{v_1}\addnodeRightTwo{a}{c}{v_2}}.
	\]
	
	Let us now define the following spaces.
	\begin{align*}
		\Tree^k(V)&:=\bigoplus_{\tree\in\Tree^k}V^{\otimes\tree}\text{ for }k\in\Zpp\\
		\Tree_k(V)&:=\bigoplus_{\tree\in\Tree_k}V^{\otimes\tree}\text{ for }k\in\Zpp\\
		\Tree^k_n(V)&:=\bigoplus_{\tree\in\Tree^k_n}V^{\otimes\tree}=\Tree^k(V)\cap\Tree_n(V)\text{ for }k,n\in\Zpp\\
		\Tree(V)&:=\bigoplus_{\tree\in\Tree}V^{\otimes\tree}=\bigoplus_{k=0}^\infty\Tree^k(V)=\bigoplus_{k=0}^\infty\Tree_k(V)=\bigoplus_{k=0}^\infty\bigoplus_{n=0}^\infty\Tree^k_n(V)
	\end{align*}
	
	The definition of $\Tree(V)$ provides canonical projections onto $\Tree^k(V)$, $\Tree_k(V)$ and $\Tree^k_n(V)$ for $k,n\in\Zpp$ and we will denote those projections by $\pi^k$, $\pi_k$ and $\pi^k_n$ respectively.
	
	We will also need truncations of the space $\Tree(V)$. Thus for $k\in\Zpp$ define the subspace 
	\[
		I^{(k)}:=\bigoplus_{\tree\in\Tree:\lvert\tree\rvert>k}V^{\hat\otimes\tree}
	\]
	and the quotient space
	\[
		\Tree^{(k)}(V):=\Tree(V)\left/I^{(k)}\right..
	\]
	The canonical projection $\Tree(V)\rightarrow\Tree^{(k)}(V)$ will be denoted by $\pi^{(k)}$. We will furthermore need the projection $\pi^{(k)}_n:=\pi^{(k)}\circ \pi_n=\pi_n\circ \pi^{(k)}$.
	
	Before we can define the additional algebraic structure on $\Tree(V)$ to turn it into a Hopf algebra we have to extend two auxiliary constructions from the previous section, namely grafting and cutting, to tensors. We start with grafting.
	\begin{defn}
		\label{def:labelledGraft}
		Let $\tree,\stree\in\Tree$ and let $t\in V^{\otimes\lvert\tree\rvert}$ and $s\in V^{\otimes\lvert\stree\rvert}$ be two elementary tensors. Let furthermore $d\in\mathrm{Gr}(\tree,\stree)$ be a grafting map. Then we define
		\begin{equation}\label{eq:grafting}
			t\stackrel d\rightharpoonup s:=\left(\rho_{\tree,\stree}\circ\rho_{\tree\stackrel d\rightharpoonup \stree}^{-1}\right)(t\otimes s)\in V^{\otimes(\lvert\tree\rvert+\lvert\stree\rvert)},
		\end{equation}
		where
		\begin{equation}\label{eq:enumConcat}
			\rho_{\tree,\stree}:\mathrm v(\tree)\sqcup\mathrm v(\stree)\rightarrow\{1,\ldots,\lvert\tree\rvert+\lvert\stree\rvert\}:v\mapsto\begin{cases}\rho_\tree(v)&v\in\mathrm v(\tree),\\\rho_\stree(v)+\lvert\tree\rvert&v\in\mathrm v(\stree).\end{cases}
		\end{equation}
		
	\end{defn}
	\begin{rem}\label{rem:dependenceOnTS}
		This definition has to be handled with care since the value of $t\stackrel d\rightharpoonup s$ may depend on the choice of representatives of $\tree$ and $\stree$. However, this is unproblematic because it is only an auxiliary construction for Definition~\ref{defn:coproducts}, where the resulting object will be independent of the choice of representatives, which will be shown in Lemma~\ref{lem:ChoiceOfReps}.
	\end{rem}
	
	By analogy with to the previous section, if $d\in\mathrm{Gr}(\tree,\stree)$ maps all elements of $\mathrm c(\mathrm r(\tree))$ to $\mathrm r(\stree)$ we write $t\circ s$ instead of $t\stackrel d\rightharpoonup s$.
	\begin{lem}
		Let $\tree,\stree\in\Tree$ and let $\mathbf t\in V^{\otimes\tree}$ and $\mathbf s\in V^{\otimes\stree}$ be two labelled trees. Let furthermore $t\in V^{\otimes\lvert\tree\rvert}$ and $s\in V^{\otimes\lvert\stree\rvert}$ be representatives of $\mathbf t$ and $\mathbf s$ respectively. Then the equivalence class $[t\circ s]\in V^{\otimes(\tree\,\circ\,\stree)}$ depends neither on the choice of $t$ and $s$ nor on the choice of representatives of $\tree$ and $\stree$.
	\end{lem}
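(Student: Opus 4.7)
The plan is to choose a canonical enumeration that trivialises the grafting formula, and then verify the two independence claims separately. Given representatives $T,S$ with enumerations $\rho_T,\rho_S$, I would build the representative $T\circ S$ of $\tree\circ\stree$ in the obvious way and equip it with the concatenation enumeration $\rho_{T\circ S}:=\rho_{T,S}$ from \eqref{eq:enumConcat}. With this choice the permutation $\rho_{\tree,\stree}\circ\rho_{\tree\circ\stree}^{-1}$ appearing in \eqref{eq:grafting} is the identity, so $t\circ s = t\otimes s$ in $V^{\otimes(|\tree|+|\stree|)}$, which reduces everything to a statement about which tensors lie in the kernel of an appropriate symmetrisation.

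Step A (independence of $t$ and $s$). The key algebraic input is the embedding $\Sym(\tree)\times\Sym(\stree)\hookrightarrow\Sym(\tree\circ\stree)$ realised by $(\pi_1,\pi_2)\mapsto\pi_1\sqcup\pi_2$. This is an order-preserving bijection of $\mathrm v(T)\sqcup\mathrm v(S)=\mathrm v(T\circ S)$ because each factor is order-preserving on its own component and the only comparabilities between the two components in $T\circ S$ involve the root $\mathrm r(S)$, which is fixed. Under the chosen enumeration this subgroup acts on $V^{\otimes(|\tree|+|\stree|)}$ by $\Sym(\tree)$-permutations of the first $|\tree|$ slots and $\Sym(\stree)$-permutations of the last $|\stree|$. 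A routine coset decomposition yields $\ker(\sym_H)\subseteq\ker(\sym_G)$ whenever $H\leq G$, so if $t-t'\in\ker(\sym_{\Sym(\tree)})$ then
\[
(t-t')\otimes s \;\in\; \ker(\sym_{\Sym(\tree)\times\{e\}}) \;\subseteq\; \ker(\sym_{\Sym(\tree\circ\stree)}),
\]
giving $[t\otimes s]=[t'\otimes s]$ in $V^{\otimes(\tree\circ\stree)}$; the symmetric argument handles changes of $s$.

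Step B (independence of representatives of $\tree$ and $\stree$). Given alternative representatives $\tilde T,\tilde S$, the compatibility built into the definition of enumerations supplies order-preserving bijections $\varphi:T\to\tilde T$ and $\psi:S\to\tilde S$ with $\rho_T=\rho_{\tilde T}\circ\varphi$ and $\rho_S=\rho_{\tilde S}\circ\psi$. I would set $\chi:=\varphi|_{\mathrm v(T)}\sqcup\psi:T\circ S\to\tilde T\circ\tilde S$; a direct check (using that $\varphi$ sends children of $\mathrm r(T)$ to children of $\mathrm r(\tilde T)$) shows $\chi$ is an order-preserving bijection satisfying $\rho_{T\circ S}=\rho_{\tilde T\circ\tilde S}\circ\chi$. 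Consequently the two induced embeddings of $\Sym(\tree\circ\stree)$ into $\mathfrak S_{|\tree|+|\stree|}$ coincide, and the quotient $V^{\otimes(\tree\circ\stree)}$ is literally the same subspace under either choice. Applied to $T$ and $\tilde T$ in isolation, the same observation shows that the embeddings of $\Sym(\tree)$ into $\mathfrak S_{|\tree|}$ coming from $\rho_T$ and $\rho_{\tilde T}$ agree, so a tensor $t\in V^{\otimes|\tree|}$ represents $\mathbf t$ under one enumeration exactly when it does under the other, and likewise for $s$.

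The main obstacle is the subgroup inclusion $\Sym(\tree)\times\Sym(\stree)\leq\Sym(\tree\circ\stree)$: this embedding is generally strict, as there can be extra symmetries of $\tree\circ\stree$ that swap isomorphic subtrees across the identified root. Hence one cannot expect $t\circ s$ and $t'\circ s$ to be related by an element of $\Sym(\tree)\times\Sym(\stree)$ even when $t-t'$ already symmetrises to zero under $\Sym(\tree)$; the inclusion $\ker(\sym_H)\subseteq\ker(\sym_G)$ for $H\leq G$ is precisely what bridges this gap.
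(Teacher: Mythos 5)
Your proof is correct and follows essentially the same route as the paper's: both reduce the claim to the linearity identity $t\circ s-\tilde t\circ s=(t-\tilde t)\circ s$ together with the embedding of $\Sym(\tree)$ (fixing the other factor) as a subgroup of $\Sym(\tree\circ\stree)$, so that $\ker(\sym_H)\subseteq\ker(\sym_G)$ for $H\leq G$ finishes the argument. You merely make explicit the coset decomposition and the enumeration bookkeeping that the paper leaves implicit.
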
	
	\begin{proof}
		Choosing a different representative of $\tree$  and/or a different choice of $t$ amount to replacing $t$ by $\tilde t$ with the property that
		\[
			\sym_{\Sym(\tree)}(t-\tilde t) = 0.
		\]
		From \eqref{eq:grafting} we get $t\circ s-\tilde t\circ s=(t-\tilde t)\circ s$.
		Thus if $\mathrm{SG}(\tree)$ embeds as a subgroup into $\mathrm{SG}(\tree\circ\stree)$ we are done because then we have
		\[
			[t\circ s]-[\tilde t\circ s] = [(t-\tilde t)\circ s]=0.
		\]
		But this is clearly the case because the operation $\circ$ identifies the roots of $\tree$ and $\stree$ and hence both $\tree$ is a subtree of $\tree\circ\stree$.
		
		The same argument shows that choosing a different representative of $\stree$ and/or a different choice of $s$ does not change $[t\circ s]$.
	\end{proof}
	The lemma allows us to define $\mathbf t\circ\mathbf s:=[t\circ s]$ for representatives $t$ and $s$ of $\mathbf t$ and $\mathbf s$ respectively.
	If $\mathbf t\in V^{\otimes\tree}$ is an elementary labelled tree as defined above, then the unique decomposition $\tree=\tree_1\circ\cdots\circ\tree_k$ from Lemma~\ref{lem:decomposition} induces a decomposition 
	\begin{equation}\label{eq:decompositionLabelled}
		\mathbf t=\mathbf t_1\circ\cdots\circ\mathbf t_k\text{ with }\mathbf t_1\in V^{\otimes\tree_1},\ldots,\mathbf t_k\in V^{\otimes\tree_k}.
	\end{equation}

	Let us now turn to cutting. Let $\tree\in\Tree$ and $C\in\mathrm{Cut}(\tree)$. Then the vertices of $R^C(\tree)$ and $P^C(\tree)$ are defined as subsets of the vertices of $\tree$ and hence there are canonical injections 
	\[
		\phi_{C,R}:\mathrm v\left(R^C(\tree)\right)\rightarrow\mathrm v(\tree)\qquad\text{and}\qquad\phi_{C,P}:\mathrm v\left(P^C(\tree)\right)\rightarrow\mathrm v(\tree).
	\] 
	\begin{defn}
		Let $\tree\in\Tree$ be a tree and let $C\in\mathrm{Cut(\tree)}$ be an admissible cut. Then we define
		\[
			\mathbf R^C:V^{\otimes \lvert\tree\rvert}\rightarrow V^{\otimes \lvert R^C(\tree)\rvert}:v_1\otimes\cdots\otimes v_{\lvert\tree\rvert}\mapsto v_{\varphi_{C,R}(1)}\otimes\cdots\otimes v_{\varphi_{C,R}(\lvert R^C(\tree)\rvert)}
		\]
		and
		\[
		\mathbf P^C:V^{\otimes \lvert\tree\rvert}\rightarrow V^{\otimes \lvert P^C(\tree)\rvert}:v_1\otimes\cdots\otimes v_{\lvert\tree\rvert}\mapsto v_{\varphi_{C,P}(1)}\otimes\cdots\otimes v_{\varphi_{C,P}(\lvert P^C(\tree)\rvert)},
		\]
		where $\varphi_{C,R}:=\rho_\tree\circ\phi_{C,R}\circ\rho_{R^C(\tree)}^{-1}$ and $\varphi_{C,P}:=\rho_\tree\circ\phi_{C,P}\circ\rho_{P^C(\tree)}^{-1}$.
	\end{defn}
	\begin{rem}
		As in Remark~\ref{rem:dependenceOnTS} we should emphasize that $\mathbf R^C$ and $\mathbf P^C$ depend on the choice of the representative of $\tree$. Nevertheless, the following definition makes sense as Lemma~\ref{lem:ChoiceOfReps} shows.
	\end{rem}
	\begin{defn}\label{defn:coproducts}
		Let $V$ be a vector space, let $\tree,\stree\in\Tree$ and let $\mathbf t\in V^{\otimes\tree}$ and $\mathbf s\in V^{\otimes\stree}$ be elementary labelled trees. 
		\begin{enumerate}[(i)]
		\item For elementary representatives $t$ and $s$ of $\mathbf t$ and $\mathbf s$ we define	
		\[
			\mathbf t\star\mathbf s:=\sum_{\mathclap{d\in\mathrm{Gr}(\tree,\stree)}}\left[ t\stackrel d\rightharpoonup s\right]\in\Tree(V).
		\]
		\item
		If $\mathbf t$ can be decomposed into $\mathbf t=\mathbf t_1\circ\cdots\circ\mathbf t_k$ as in \eqref{eq:decompositionLabelled} we define
		\[
			\Delta_{\mathrm{GL}}(\mathbf t):=\sum_{I\subset\{1,\ldots,k\}}\mathop\bigcirc_{i\in I}\mathbf t_i\otimes\mathop\bigcirc_{\mathclap{i\in I^{\mathrm c}}}\mathbf t_i\in\Tree(V)\otimes\Tree(V),
		\]
		where the sum ranges over all subsets of $\{1,\ldots,k\}$ and we set $\mathop\bigcirc_{i\in \emptyset}\mathbf t_i=\mathbf 1$. 
		 \item For an elementary representative $t$ of $\mathbf t$ we define
		\[
			\Delta_{\mathrm{CK}}(\mathbf t):=\sum_{C\in\mathrm{Cut(\tree)}}\left[\mathbf P^C(t)\right]\otimes\left[\mathbf R^C(t)\right]\in\Tree(V)\otimes\Tree(V).
		\]
		\item
		For $\lambda\in\R$ we define
		\[
			\eta(\lambda):=\lambda\mathbf 1\in\Tree(V)
		\]
		\item
		Finally we define
		\[
			\eps(\mathbf t):=\begin{cases}\mathbf t,&\tree=\mathbf 1,\\0,&\text{otherwise}\end{cases}\in\R.
		\]
		\end{enumerate}
		All of the above maps are then extended linearly to all $\mathbf t,\mathbf s\in\Tree(V)$ and $\lambda\in\R$.
	\end{defn}
	
	\begin{exmp}
		\begin{align*}
			\tikzTree{5}{\drawRoot\addnodeLeftTwo{a}{b}{v_1}\addnodeRightTwo{a}{c}{v_2}}\ \ \star\tikzTree{5}{\drawRoot\addnodeMiddle{a}{b}{v_3}}
			\ \ =&\kern-1em
			\tikzTree{5}{\drawRoot\addnodeLeft{a}{b}{v_1}\addnodeRight{a}{c}{v_3}\addnodeMiddle{a}{d}{v_2}}\ \ +
	\tikzTree{15}{\drawRoot\addnodeLeftTwo{a}{b}{v_1}\addnodeRightTwo{a}{c}{v_3}\addnodeMiddle{c}{d}{v_2}}\ \ +
	\tikzTree{15}{\drawRoot\addnodeLeftTwo{a}{b}{v_2}\addnodeRightTwo{a}{c}{v_3}\addnodeMiddle{c}{d}{v_1}}\ \ +\kern-.8em
	\tikzTree{15}{\drawRoot\addnodeMiddle{a}{b}{v_3}\addnodeRightTwo{b}{c}{v_1}\addnodeLeftTwo{b}{d}{v_2}}\\
			\tikzTree{5}{\drawRoot\addnodeLeftTwo{a}{b}{v_1}\addnodeRightTwo{a}{c}{v_2}}\ \ \circ\tikzTree{5}{\drawRoot\addnodeMiddle{a}{b}{v_3}}
			\ \ =&\kern-1em
			\tikzTree{5}{\drawRoot\addnodeLeft{a}{b}{v_1}\addnodeRight{a}{c}{v_3}\addnodeMiddle{a}{d}{v_2}}\\
			\Delta_{\mathrm{GL}}\left(\kern-.6em\tikzTree{15}{\drawRoot\addnodeLeftTwo{a}{b}{v_1}\addnodeRightTwo{a}{c}{v_3}\addnodeMiddle{c}{d}{v_2}}\right)
			\ \ =&\ 
			\tikzTree{15}{\drawRoot\addnodeLeftTwo{a}{b}{v_1}\addnodeRightTwo{a}{c}{v_3}\addnodeMiddle{c}{d}{v_2}}\otimes\tikzTree{-3}{\drawRoot}
			\ \ +
			\tikzTree{5}{\drawRoot\addnodeMiddle{a}{b}{v_1}}
			\otimes\kern-.3em
			\tikzTree{15}{\drawRoot\addnodeMiddle{a}{c}{v_3}\addnodeMiddle{c}{d}{v_2}}		
			\ \ +\ 
			\tikzTree{15}{\drawRoot\addnodeMiddle{a}{c}{v_3}\addnodeMiddle{c}{d}{v_2}}		
			\,\otimes\kern-.7em
			\tikzTree{5}{\drawRoot\addnodeMiddle{a}{b}{v_1}}
			\ \ +\ \ 
			\tikzTree{-3}{\drawRoot}\otimes\kern-.3em
			\tikzTree{15}{\drawRoot\addnodeLeftTwo{a}{b}{v_1}\addnodeRightTwo{a}{c}{v_3}\addnodeMiddle{c}{d}{v_2}}\\
			\Delta_{\mathrm{CK}}\left(\kern-.6em\tikzTree{15}{\drawRoot\addnodeLeftTwo{a}{b}{v_1}\addnodeRightTwo{a}{c}{v_3}\addnodeMiddle{c}{d}{v_2}}\right)
			\ \ =&\ 
			\tikzTree{15}{\drawRoot\addnodeLeftTwo{a}{b}{v_1}\addnodeRightTwo{a}{c}{v_3}\addnodeMiddle{c}{d}{v_2}}\otimes\tikzTree{-3}{\drawRoot}
			\ \ +
			\tikzTree{5}{\drawRoot\addnodeMiddle{a}{b}{v_1}}
			\otimes\kern-.3em
			\tikzTree{15}{\drawRoot\addnodeMiddle{a}{c}{v_3}\addnodeMiddle{c}{d}{v_2}}		
			\ \ +\ 
			\tikzTree{15}{\drawRoot\addnodeMiddle{a}{c}{v_3}\addnodeMiddle{c}{d}{v_2}}		
			\,\otimes\kern-.7em
			\tikzTree{5}{\drawRoot\addnodeMiddle{a}{b}{v_1}}
			\ \ +\ \ 
			\tikzTree{-3}{\drawRoot}\otimes\kern-.3em
			\tikzTree{15}{\drawRoot\addnodeLeftTwo{a}{b}{v_1}\addnodeRightTwo{a}{c}{v_3}\addnodeMiddle{c}{d}{v_2}}
			\\&+		
			\tikzTree{5}{\drawRoot\addnodeLeftTwo{a}{b}{v_1}\addnodeRightTwo{a}{c}{v_2}}\otimes\kern-.7em\tikzTree{5}{\drawRoot\addnodeMiddle{a}{b}{v_3}}	
			\ \ +
			\tikzTree{5}{\drawRoot\addnodeMiddle{a}{c}{v_2}}\otimes\kern-.7em\tikzTree{5}{\drawRoot\addnodeRightTwo{a}{b}{v_3}\addnodeLeftTwo{a}{c}{v_1}}	
		\end{align*}
	\end{exmp}	
	
	\begin{lem}\label{lem:ChoiceOfReps}
		With the notation as in the previous definition, neither $\mathbf t\star\mathbf s$ nor $\Delta_{\mathrm{CK}}(\mathbf t)$ depends on the choice of $t$ and $s$ or on the choice of representatives of $\tree$ and $\stree$.
	\end{lem}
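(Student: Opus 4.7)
The plan is to reduce both claims to invariance under the natural actions of the symmetry groups $\Sym(\tree)$ and $\Sym(\stree)$, exploiting the fact that summing over all grafting maps (resp.\ admissible cuts) symmetrises the construction in exactly the right way. Two elementary tensor representatives $t, \tilde t$ of the same $\mathbf t \in V^{\otimes \tree}$ satisfy $t - \tilde t \in \ker\sym_{\Sym(\tree)}$, and this kernel is spanned by elements of the form $a - \pi a$ with $a \in V^{\otimes \lvert\tree\rvert}$ and $\pi \in \Sym(\tree)$. By linearity it therefore suffices to check that replacing $t$ by $\pi t$ for some $\pi \in \Sym(\tree)$, and $s$ by $\sigma s$ for some $\sigma \in \Sym(\stree)$, leaves the sums defining $\mathbf t \star \mathbf s$ and $\Delta_{\mathrm{CK}}(\mathbf t)$ unchanged.

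For the product $\star$, the group $\Sym(\tree)$ acts on $\mathrm{Gr}(\tree, \stree)$ via $d \mapsto d \circ \pi|_{\mathrm c(\mathrm r(\tree))}^{-1}$ (well-defined because an order-preserving bijection of $\tree$ permutes the children of the root), and $\Sym(\stree)$ acts via $d \mapsto \sigma \circ d$; both are bijections of the finite set $\mathrm{Gr}(\tree,\stree)$. The identity to verify is then that, after re-indexing $d$ by these actions,
\[
\bigl[(\pi t) \stackrel{d}{\rightharpoonup} (\sigma s)\bigr] \;=\; \bigl[t \stackrel{d}{\rightharpoonup} s\bigr] \qquad \text{in } V^{\otimes(\tree \stackrel{d}{\rightharpoonup} \stree)},
\]
which is established by exhibiting an element of $\Sym(\tree \stackrel{d}{\rightharpoonup} \stree)$ whose action on the raw tensor in $V^{\otimes(\lvert\tree\rvert+\lvert\stree\rvert)}$, transported via $\rho_{\tree \stackrel{d}{\rightharpoonup} \stree}$, carries one side to the other. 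Summing over $d$ then yields invariance, since the re-indexing is a bijection of $\mathrm{Gr}(\tree,\stree)$. An entirely analogous argument handles $\Delta_{\mathrm{CK}}$: the group $\Sym(\tree)$ acts on $\mathrm{Cut}(\tree)$ by $C \mapsto \pi(C)$, the permutation $\pi$ induces canonical isomorphisms of partially ordered sets $P^C(\tree) \to P^{\pi C}(\tree)$ and $R^C(\tree) \to R^{\pi C}(\tree)$, and the resulting raw tensor identities collapse in the quotients $V^{\otimes P^C(\tree)}$ and $V^{\otimes R^C(\tree)}$, so the sum over $C$ is again invariant after re-indexing.

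Invariance under the choice of partially ordered set representative of $\tree$ (resp.\ $\stree$) follows separately but by a similar mechanism: any order-preserving bijection $\varphi \colon T \to \tilde T$ between representatives intertwines the enumerations by assumption, so transporting all data along $\varphi$ yields an identical tensor computation. The main obstacle I anticipate is the careful bookkeeping of the four enumerations $\rho_\tree$, $\rho_\stree$, $\rho_{\tree,\stree}$ and $\rho_{\tree \stackrel{d}{\rightharpoonup} \stree}$, and verifying that the combined action of $\Sym(\tree) \times \Sym(\stree)$ on tensors, together with the simultaneous re-indexing of grafting maps (or cuts), really does realise an element of $\Sym(\tree \stackrel{d}{\rightharpoonup} \stree)$ in the way the quotient identity requires. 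This is essentially a combinatorial exercise, but it is the one step where Definition~\ref{def:labelledGraft} and the enumeration conventions must be unwound with care; once completed, both halves of the lemma follow at once.
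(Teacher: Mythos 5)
Your argument is correct and follows essentially the same route as the paper's: both reduce the claim to the action of $\Sym(\tree)\times\Sym(\stree)$ on representatives, absorb that action into a re-indexing of the grafting maps (respectively the admissible cuts), and conclude because these index sets are invariant under the re-indexing, with the compatibility of enumerations handling the change of poset representative. Your preliminary step via the fact that $\ker\sym_{\Sym(\tree)}$ is spanned by elements of the form $a-\pi a$ is a slightly more careful phrasing of the paper's assertion that elementary representatives are related by symmetries, but the mechanism is identical.
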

	\begin{proof}

		Let $\tilde t$ and $\tilde s$ be different elementary representatives of $\mathbf t$ and $\mathbf s$ respectively. The simple yet crucial observation is that there exist 
		$\sigma_1\in\Sym\left(\tree\right)$ and $\sigma_2\in\Sym\left(\stree\right)$
		 such that we have

		\[
		\left[\tilde t\stackrel{d}\rightharpoonup \tilde s\right]
		=\left[\sigma_1(t)\stackrel{d}\rightharpoonup\sigma_2(s)\right]
		=\left[t\stackrel {\sigma_2\,\circ\,d\,\circ\,\sigma_1}\rightharpoonup s\right],
		\]
		where the second equality is a straightforward consequence of Definition~\ref{def:labelledGraft} and the fact that the enumerations of different representatives of the same tree are compatible.
		Here $\sigma_1$ and $\sigma_2$ may also depend on the choice of representatives of $\tree$ and $\stree$. On the other hand, for fixed $\sigma_1\in\Sym(\tree)$ and $\sigma_2\in\Sym(\stree)$ we have
		\[
			\mathrm{Gr}(\tree,\stree)=\{\sigma_2\circ d\circ\sigma_1:d\in\mathrm{Gr}(\tree,\stree)\}.
		\]
		Hence $\mathbf t\star\mathbf s$ is well-defined as long as the same representatives are chosen for each term in the defining sum.
		  
		  A similar argument shows that $\Delta_{\mathrm{CK}}(\mathbf t)$ is well-defined since we sum over all admissible cuts.
	\end{proof}
	
	\begin{prop}\label{prop:algebraicHopf}
		Each of the tuples
	\begin{align*}
		\Hi_{\mathrm{GL}}(V)&:=(\Tree(V),\star,\eta,\Delta_{GL},\eps)\text{ and}\\
		\Hi_{\mathrm{CK}}(V)&:=(\Tree(V),\circ,\eta,\Delta_{CK},\eps)
	\end{align*}	
	defines a graded bialgebra and therefore each of them can be equipped with an antipode which turns them into Hopf algebras. We call $\Hi_{GL}(V)$ the \emph{Grossman-Larson Hopf algebra over $V$} and $\Hi_{CK}(V)$ the \emph{Connes-Kreimer Hopf algebra over $V$}.
	\end{prop}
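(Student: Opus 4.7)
The approach is to reduce each bialgebra axiom to the corresponding identity for the classical unlabelled Connes--Kreimer and Grossman--Larson Hopf algebras, and then observe that the $V$-labels are merely ``carried along'' through the combinatorial operations. First I would reconfirm that $\star$, $\circ$, $\Delta_{\mathrm{GL}}$ and $\Delta_{\mathrm{CK}}$ descend well to the symmetrized quotients $V^{\otimes\tree}$; Lemma~\ref{lem:ChoiceOfReps} already handles independence of representatives, and it remains to check that the outputs lie in the appropriate symmetrized summands, which follows by noting that summing over the full sets $\mathrm{Gr}(\tree,\stree)$ and $\mathrm{Cut}(\tree)$ automatically produces a $\Sym$-invariant tensor.

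Next I would verify the algebra and coalgebra axioms separately. Associativity and commutativity of $\circ$ are immediate because $\circ$ just identifies roots; unitality in both cases is trivial since $\mathbf{1}$ is the unique tree of degree zero. Associativity of $\star$ follows by exhibiting a bijection between the grafting-map data appearing on the two sides of $(\mathbf{t}\star\mathbf{s})\star\mathbf{u}=\mathbf{t}\star(\mathbf{s}\star\mathbf{u})$: on either side one obtains a sum over all ways of attaching each child of $\mathrm{r}(\mathbf{t})$ and $\mathrm{r}(\mathbf{s})$ to vertices of the remaining forest, and the match is given by tracking the resulting ancestor chains. Coassociativity of $\Delta_{\mathrm{GL}}$ is a three-way partition of the index set $\{1,\ldots,k\}$ from the decomposition \eqref{eq:decompositionLabelled}, while coassociativity of $\Delta_{\mathrm{CK}}$ reduces to the classical bijection between pairs of iterated admissible cuts and admissible cuts together with admissible cuts of the resulting $P^C(\tree)$. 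Counitality in both cases is a direct consequence of the definition of $\eps$.

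The main obstacle will be the bialgebra compatibility $\Delta(\mathbf{t}\bullet\mathbf{s})=\Delta(\mathbf{t})\bullet\Delta(\mathbf{s})$ for each pair. For $(\circ,\Delta_{\mathrm{CK}})$ this is the classical Connes--Kreimer identity: an admissible cut of $\tree\circ\stree$ is exactly a pair $(C_\tree,C_\stree)$ of admissible cuts of the factors, and $\mathbf{R}^C$, $\mathbf{P}^C$ then distribute tensorially over the labels of $\tree$ and $\stree$ independently. For $(\star,\Delta_{\mathrm{GL}})$ I would expand $\mathbf{t}\star\mathbf{s}=\sum_d[t\stackrel{d}{\rightharpoonup}s]$ and note that the subtrees at the root of each $\tree\stackrel{d}{\rightharpoonup}\stree$ split into (i) subtrees of $\stree$ at its root receiving no graftings, (ii) subtrees of $\stree$ at its root augmented by grafted subtrees of $\tree$, plus subtrees of $\tree$ at its root grafted at $\mathrm{r}(\stree)$; a case analysis on which children of $\mathrm{r}(\tree)$ end up in $I$ versus $I^{\mathrm c}$ in $\Delta_{\mathrm{GL}}$ matches the two sides. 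In both cases, because the label tensors are only permuted according to an underlying combinatorial bijection, the labelled identity follows from the unlabelled one after passing to the $\Sym$-invariant quotients.

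Finally, both structures are graded by the degree $|\tree|$: the products preserve $|\cdot|$, and for $\Delta_{\mathrm{CK}}$ we have $|P^C(\tree)|+|R^C(\tree)|=|\tree|$ while $\Delta_{\mathrm{GL}}$ preserves degree by construction. The degree-zero component is $V^{\otimes\mathbf{1}}\cong\R$, so each bialgebra is connected and graded. The standard recursive formula for the convolution inverse of the identity then produces an antipode on each, making $\Hi_{\mathrm{GL}}(V)$ and $\Hi_{\mathrm{CK}}(V)$ Hopf algebras.
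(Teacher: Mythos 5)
Your proposal is correct and follows essentially the same strategy as the paper: reduce each axiom to the classical unlabelled combinatorial bijections (grafting-data for associativity of $\star$, cuts for $\Delta_{\mathrm{CK}}$, subsets of root-branches for $\Delta_{\mathrm{GL}}$) and observe that the label tensors are merely permuted along the way, with connectedness of the grading supplying the antipode. In fact the paper only writes out the associativity of $\star$ in detail (via the Grossman--Larson correspondence between the pairs $(d_1,d_2)$ and $(e_1,e_2)$ of grafting maps) and declares the remaining (co)associativity and compatibility conditions analogous, so your sketch covers more of the axioms explicitly at a comparable level of rigour.
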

	\begin{proof} 
		Let us show that the product $\star$ is associative. We will essentially follow \cite{Grossman_Larson_1989}. Let thus $\tree_1,\tree_2,\tree_3\in\Tree$ and $\mathbf t_1\in V^{\otimes\tree_1}$, $\mathbf t_2\in V^{\otimes\tree_2}$ and $\mathbf t_3\in V^{\otimes\tree_3}$ and fix representatives $t_1$, $t_2$ and $t_2$ of $\mathbf t_1$, $\mathbf t_2$ and $\mathbf t_3$ respectively. Then we need to show
		\begin{equation}\label{eq:GraftingAssociative}
			\sum_{d_1\in\mathrm{Gr}(\tree_1,\tree_2)}\sum_{d_2\in\mathrm{Gr}(\tree_1\stackrel{d_1}\rightharpoonup\tree_2,\tree_3)} \left[\left(t_1\stackrel{d_1}\rightharpoonup t_2\right)\stackrel{d_2}\rightharpoonup t_3\right]
			=
			\sum_{e_2\in\mathrm{Gr}(\tree_2,\tree_3)}\sum_{e_1\in\mathrm{Gr}(\tree_1,\tree_2\stackrel{e_2}\rightharpoonup\tree_3)}\left[ t_1\stackrel{e_1}\rightharpoonup \left(t_2\stackrel{e_2}\rightharpoonup t_3\right)\right].
		\end{equation}
		Thus assume $d_1\in\mathrm{Gr}(\tree_1,\tree_2)$ and $d_2\in\mathrm{Gr}(\tree_1\stackrel{d_1}\rightharpoonup\tree_2,\tree_3)$. Define $e_2\in\mathrm{Gr}(\tree_2,\tree_3)$ to be the restriction of $d_2$ to the children of the root of $\tree_2$ and define $e_1\in\mathrm{Gr}(\tree_1,\tree_2\stackrel{e_2}\rightharpoonup\tree_3)$ via
		\[
			e_1(v):=\begin{cases}d_1(v)&d_1(v)\neq\mathrm{r}(\tree_2)\\d_2(v)&d_1(v)=\mathrm{r}(\tree_2)\end{cases}.
		\]
		It follows from \cite[Lem. 2.6/2.7]{Grossman_Larson_1989} that this gives a one-to-one correspondence between the terms on both sides of \eqref{eq:GraftingAssociative} such that
		\[
		\left(\tree_1\stackrel{d_1}\rightharpoonup \tree_2\right)\stackrel{d_2}\rightharpoonup \tree_3= \tree_1\stackrel{e_1}\rightharpoonup \left(\tree_2\stackrel{e_2}\rightharpoonup \tree_3\right)
		\]
		holds. In light of \eqref{eq:enumConcat} we define
		\begin{align*}
			\rho_{\tree_1,\tree_2,\tree_3}:\mathrm{v}(\tree_1)\sqcup\mathrm{v}(\tree_2)\sqcup\mathrm{v}(\tree_3)&\rightarrow\{1,\ldots,\lvert\tree_1\rvert+\lvert\tree_2\rvert+\lvert\tree_3\rvert\}\\
			v&\mapsto
			\begin{cases}
			\rho_{\tree_1}(v)&v\in\mathrm{v}(\tree_1)\\
			\rho_{\tree_2}(v)+\lvert\tree_1\rvert&v\in\mathrm{v}(\tree_2)\\
			\rho_{\tree_3}(v)+\lvert\tree_1\rvert+\lvert\tree_2\rvert&v\in\mathrm{v}(\tree_3)\\
			\end{cases}
		\end{align*}
		and it is then easy to verify that 
		\[
			\left(t_1\stackrel{d_1}\rightharpoonup t_2\right)\stackrel{d_2}\rightharpoonup t_3= t_1\stackrel{e_1}\rightharpoonup \left(t_2\stackrel{e_2}\rightharpoonup t_3\right)=\left(\rho_{\tree_1,\tree_2,\tree_2}\circ\rho_{\left(\tree_1\stackrel{d_1}\rightharpoonup \tree_2\right)\stackrel{d_2}\rightharpoonup \tree_3}^{-1}\right)(t_1\otimes t_2\otimes t_3)
		\]
		where $d_1,d_2$ and $e_1,e_2$ are related as above. Hence associativity of $\star$ follows.
		
	In a similar way one can verify that the remaining (co)associativity and compatibility conditions hold.
	\end{proof}
	
	Observe that $I^{(k)}$ as defined above is an ideal for both $\star$ and $\circ$. Hence $\Tree^{(k)}$ can be interpreted as a quotient algebra of both $\left(\Tree(V),\star\right)$ and $\left(\Tree(V),\circ\right)$. On the other hand $I^{(k)}$ is a coideal for neither $\Delta_{\mathrm{GK}}$ nor $\Delta_{\mathrm{CL}}$ and hence the quotient is not a quotient Hopf algebra. Furthermore $\Tree^{(k)}(V)$ is isomorphic to the closed subspace 
	\[
		\bigoplus_{\tree\in\Tree:\lvert\tree\rvert\leq k}V^{\otimes\tree}
	\]
	of $\Tree(V)$, which justifies the name \emph{truncated tree algebra}. We will use this isomorphism implicitly without mentioning it. Note however that this vector space isomorphism is not an algebra morphism.
	
	\section{Algebras of labelled trees over a topological vector space}
	\label{sec:Topology}
	Let $V$ be a complete locally convex vector space. Our aim is to understand how the topology of $V$ can induce a topology on $\Tree(V)$. This question can be split into two problems. First we have to introduce a topology on each of the spaces $V^{\otimes\tree}$ for $\tree\in\Tree$ and second we have to choose an appropriate topology on their direct sum.

	Let $\tree\in\Tree$ be a tree. The space $V^{\otimes\tree}$ is (isomorphic to) a quotient of a tensor power of $V$ and therefore we can use the well-developed theory of topological tensor products. We recommend \cite{Grothendieck_1955} for the necessary background. Most importantly, there is no canonical topology on $V^{\otimes k}$ for $k\in\Zpp$. However, if $(\xi_i)_{i\in I}$ is a family of seminorms that generates the topology of $V$, then for every $k\in\Zpp$ and indices $i_1,\ldots,i_k\in I$ there exists a seminorm $\xi_{i_1}\otimes\cdots\otimes\xi_{i_k}$ on $V^{\otimes k}$ such that the system of seminorms thus obtained satisfies
	\[
		\xi_{i_1}\otimes\cdots\otimes\xi_{i_{m+n}}(a\otimes b)=\xi_{i_1}\otimes\cdots\otimes\xi_{i_m}(a)\xi_{i_{m+1}}\otimes\cdots\otimes\xi_{i_{m+n}}(b)
	\]
	for all $m,n\in\Zpp$, $a\in V^{\otimes m}$, $b\in V^{\otimes n}$ and $i_1,\ldots,i_{m+n}\in I$. A system of seminorms with this property is called a system of \emph{cross (semi)norms}. This system is in general not unique and the most important choices are the \emph{projective seminorms}, given by
	\[
		\xi_{i_1}\otimes\cdots\otimes\xi_{i_k}(a):=\inf\left\{\sum_{j=1}^N\xi_{i_1}(v^j_1)\cdots\xi_{i_k}(v^j_k):a=\sum_{j=1}^N v_1^j\otimes\cdots\otimes v_k^j\right\}
	\]
	and the \emph{injective seminorms}, given by
	\[
		\xi_{i_1}\otimes\cdots\otimes\xi_{i_k}(a):=\sup\left\{\lvert(v_1'\otimes\cdots\otimes v'_k)(a)\rvert:v_1',\ldots,v_k'\in \kern-.1em V'\kern-.2em,\xi_{i_1}(v_1')=\cdots=\xi_{i_k}(v_k')=1\right\}
	\]
	for all $a\in V^{\otimes k}$. Both the projective and the injective seminorms are symmetric in the sense of the following definition.
	
	\begin{defn}
		Let $V$ be a locally convex vector space and let $(\xi_i)_{i\in I}$ be a family of seminorms that generates the topology of $V$. A system of cross seminorms on the tensor powers of $V$ is called \emph{symmetric} if for all $k\in\Zpp$, all $i_1,\ldots,i_k\in I$, all $a\in V^{\otimes k}$ and all $\sigma\in\mathfrak S_k$ we have
		\[
			\xi_1\otimes\cdots\otimes\xi_k(a)=\xi_{\sigma(1)}\otimes\cdots\otimes\xi_{\sigma(k)}(\sigma(a)).
		\]
	\end{defn}
		 
	From now on we will always assume that the tensor powers of $V$ are equipped with a symmetric system of cross seminorms and for all $k\in\Zpp$ we will denote by $V^{\hat\otimes k}$ the completion of $V^{\otimes k}$ with respect to the respective family of seminorms. These assumptions imply that for all trees $\tree\in\Tree$ the projection $\sym_{\Sym(\tree)}$ defined above is continuous. Therefore it can be uniquely extended to a continuous map on $V^{\hat\otimes\lvert\tree\rvert}$ and we define
	\[	
		V^{\hat\otimes\tree}:=V^{\hat\otimes\lvert\tree\rvert}\left/\mathrm{ker}\left(\sym_{\Sym(\tree))}\right)\right..
	\]
	 Equivalently, $V^{\hat\otimes\tree}$ is the completion of $V^{\otimes\tree}$ under the quotient topology.

	\[
		\widehat\Tree(V):=\mathrm{cl}\left(\bigoplus_{\tree\in\Tree}V^{\hat\otimes\tree}\right),
	\]
	where the closure is taken with respect to the product topology. 	Furthermore we observe that for $\tree,\stree\in\Tree$ we have an isomorphism
	\[
		V^{\otimes\tree}\otimes V^{\otimes\stree}\simeq\left(V^{\otimes\lvert\tree\rvert}\otimes V^{\otimes\lvert\stree\rvert}\right)\left/\left(V^{\otimes\lvert\tree\rvert}\otimes\mathrm{ker}\left(\sym_{\Sym(\stree)}\right)+\mathrm{ker}\left(\sym_{\Sym(\tree)}\right)\otimes V^{\otimes\lvert\stree\rvert}\right)\right.
	\]
	and therefore the system of cross-seminorms that we have chosen for the $V^{\otimes k}$ gives a locally convex topology on $V^{\otimes\tree}\otimes V^{\otimes\stree}$. Thus, using distributivity of the tensor product and direct sums, we can define the completion of $\widehat\Tree(V)\otimes\widehat\Tree(V)$ via
	\[
		\widehat\Tree(V)\hat\otimes\widehat\Tree(V)=\mathrm{cl}\left(\bigoplus_{\tree,\stree\in\Tree}\left(V^{\hat\otimes\tree}\hat\otimes V^{\hat\otimes\stree}\right)\right).
	\]	
	We can now state a topological version of Proposition~\ref{prop:algebraicHopf}.
	
	\begin{prop}
		Each of the tuples
	\begin{align*}
		\widehat\Hi_{\mathrm{GL}}(V)&:=\left(\widehat\Tree(V),\star,\eta,\Delta_{GL},\eps\right)\text{ and}\\
		\widehat\Hi_{\mathrm{CK}}(V)&:=\left(\widehat\Tree(V),\circ,\eta,\Delta_{CK},\eps\right)
	\end{align*}	
	defines a topological graded bialgebra and therefore each of them can be equipped with an antipode which turns them into topological Hopf algebras. We call $\Hi_{GL}(V)$ the \emph{topological Grossman-Larson Hopf algebra over $V$} and $\Hi_{CK}(V)$ the \emph{topological Connes-Kreimer Hopf algebra over $V$}.
	\end{prop}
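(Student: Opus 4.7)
The plan is to leverage Proposition~\ref{prop:algebraicHopf} directly: the algebraic identities (associativity, coassociativity, compatibility, and once an antipode is constructed, the antipode axiom) already hold on the dense subspace $\Tree(V)\subset\widehat\Tree(V)$, so it suffices to show that every structure map $\star$, $\circ$, $\Delta_{\mathrm{GL}}$, $\Delta_{\mathrm{CK}}$, $\eta$, $\eps$ is continuous. Each will then extend uniquely to the completion, and the identities will pass by density. Since the topology on $\widehat\Tree(V)$ is induced graded-piece-wise by the completions $V^{\hat\otimes\tree}$ (and similarly for $\widehat\Tree(V)\hat\otimes\widehat\Tree(V)$), this reduces to checking each map on a single pair $V^{\otimes\tree}\otimes V^{\otimes\stree}$ or a single $V^{\otimes\tree}$.

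The uniform key observation across all structure maps is the following: inspecting Definition~\ref{def:labelledGraft} together with the formulas for $\mathbf{P}^C, \mathbf{R}^C$, every summand appearing in $t\stackrel d\rightharpoonup s$, in $\mathbf{P}^C(t)\otimes\mathbf{R}^C(t)$, or in the $\Delta_{\mathrm{GL}}$-decomposition coming from Lemma~\ref{lem:decomposition}, is obtained from the elementary tensor $t$ (resp.\ $t\otimes s$) simply by a permutation of its tensor factors. By the symmetry assumption on the cross seminorms, every such permutation is an isometry of $V^{\otimes k}$ which extends by continuity to $V^{\hat\otimes k}$. On each fixed pair of graded pieces there are only finitely many grafting maps, finitely many admissible cuts, and finitely many subsets of the root-children index set; hence each structure map restricts to a finite sum of continuous bilinear (or linear) maps and is continuous there. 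Continuity of $\sym_{\Sym(\tree)}$, already used in defining $V^{\hat\otimes\tree}$, ensures that everything descends to the quotients, with well-definedness recorded by Lemma~\ref{lem:ChoiceOfReps}; the maps $\eta$ and $\eps$ are trivially continuous.

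It remains to produce continuous antipodes. Both bialgebras are graded and connected: the degree-zero piece is $\R\cdot\mathbf{1}$, and the structure maps respect the grading (the coproduct of a degree-$n$ element splits into pieces of total degree $n$). In any graded connected topological bialgebra, the antipode is determined by the recursion $S(\mathbf{t})=-\mathbf{t}-\sum S(\mathbf{t}_{(1)})\cdot\mathbf{t}_{(2)}$, summed over Sweedler components with $\mathbf{t}_{(1)}$ of strictly smaller degree; on $V^{\hat\otimes\tree}$ this terminates in at most $\lvert\tree\rvert$ steps, and each step is a finite composition of the operations already shown to be continuous, so $S$ is continuous on every graded piece and extends to $\widehat\Tree(V)$. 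The main point to get right -- but not really an obstacle -- is the observation that grafting, cutting, and the Grossman--Larson splitting are, at the level of tensor factors, merely permutations of the $v_i$; once this is matched against the definitions, the symmetry assumption on the cross seminorms carries out the entire analytic part of the argument.
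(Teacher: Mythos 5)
Your proposal is correct and follows essentially the same route as the paper: the paper's proof simply notes that all structure maps are built from tensor multiplications and permutations of tensor factors, hence are continuous by the symmetry of the chosen cross seminorms, and then invokes Proposition~\ref{prop:algebraicHopf}. You have merely made explicit the finiteness of the sums on each pair of graded pieces, the descent to the quotients $V^{\hat\otimes\tree}$, and the standard recursive construction of the antipode in a graded connected bialgebra, all of which the paper leaves implicit.
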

	\begin{proof}
		It is immediately clear that all operations of $\widehat\Hi_{\mathrm{GL}}(V)$ and $\widehat\Hi_{\mathrm{CK}}(V)$ are continuous since they only consist of tensor multiplications and permutations. Hence the claim follows directly from Proposition~\ref{prop:algebraicHopf}.
	\end{proof}
	\begin{rem}
		Technically, a topological Hopf algebra $\Hi$ is not a Hopf algebra since its coproduct may take values in $\Hi\hat\otimes\Hi$ rather than $\Hi\otimes\Hi$. Nevertheless, when we write \emph{Hopf algebra} in the sequel, we always mean \emph{Hopf algebra or topological Hopf algebra}. 
	\end{rem}

	\begin{rem}
		There are many sensible choices for a topology on $\bigoplus_{\tree\in\Tree}V^{\hat\otimes\tree}$. For instance, one could consider a topology akin to the one that is introduced in \cite{Chevyrev_Lyons_2016} for the tensor algebra. If $V$ is Banach, considering the $l^p$ direct sum for $p\in[1,\infty]$ might be a useful choice. We have chosen the product topology because it is the coarsest sensible choice (in the sense that it renders the canonical projections continuous) and hence it leads to the largest possible closure (which turns out to be the direct product $\prod_{\tree\in\Tree}V^{\hat\otimes\tree}$). Therefore the separation results in the next section hold automatically for any other reasonable choice of topology.
	\end{rem}

	\section{The Grossman-Larson and Connes-Kreimer algebras form a dual pair}
	\label{sec:Duality}
	
	In the unlabelled case (which corresponds to $V=\R$ in our notation) it has been shown in \cite{Hoffman_2003} that $\Hi_{\mathrm{CK}}(\R)$ and $\Hi_{\mathrm{GL}}(\R)$ are graded duals of each other. The graded dual $V^{\mathrm{gr}}$ of a graded vector space $V$ is defined as the direct sum of the duals of its homogeneous subspaces. This is no longer true if $\R$ is replaced by an infinite dimensional vector space $V$, because in this case the spaces $(\Tree(V)\otimes\Tree(V))^{\mathrm{gr}}$ and $\Tree(V)^{\mathrm{gr}}\otimes\Tree(V)^{\mathrm{gr}}$ are no longer canonically isomorphic. Nevertheless, the Grosmman-Larson and Connes-Kreimer algebras still form a dual pair in a sense which will be made precise in this section.
	
	\begin{defn}
		Let $V$ and $W$ be vector spaces and let $\langle\cdot,\cdot\rangle:V\times W\rightarrow\R$ be a bilinear map. The triple $(V,W,\langle\cdot,\cdot\rangle)$ is called a \emph{dual pair of vector spaces} if
		\begin{enumerate}[(i)]
			\item The set of linear maps $\{\langle\cdot,w\rangle:w\in W\}$ separates $V$ and
			\item the set of linear maps $\{\langle v,\cdot\rangle:v\in V\}$ separates $W$.
		\end{enumerate}
		If $V$ and $W$ are locally convex vector spaces, we say that $(V,W,\langle\cdot,\cdot\rangle)$ is a \emph{dual pair of locally convex vector spaces} if in addition $\langle\cdot,w\rangle$ and $\langle v,\cdot\rangle$ are continuous for all $v\in V$ and $w\in W$. 
	\end{defn}
	
	\begin{rem}
		Any bilinear map $\langle\cdot,\cdot\rangle:V\times W\rightarrow R$ induces a bilinear map on $V^{\otimes k}\times W^{\otimes k}$ for $k\in\Zpp$ which is given by
		\[
			(v_1\otimes\cdots\otimes v_k,w_1\otimes\cdots\otimes w_k)\mapsto \langle v_1,w_1\rangle\cdots\langle v_k,w_k\rangle.
		\]
		We will use this fact below and denote the resulting map also by $\langle\cdot,\cdot\rangle$ without further comment.
	\end{rem}
	
	\begin{lem}\label{lem:TreeProductSeparates}
		Let $\tree\in\Tree$ and let $(V,W,\langle\cdot,\cdot\rangle)$ be a dual pair of vector spaces. Define 
		\begin{equation}\label{eq:TreePairing}
			\langle\cdot,\cdot\rangle_\tree:V^{\otimes\tree}\times W^{\otimes\tree}\rightarrow\R:(\mathbf t_1,\mathbf t_2)\mapsto\sum_{\sigma\in\Sym(\tree)}\langle \sigma(t_1),t_2\rangle=\sum_{\sigma\in\Sym(\tree)}\langle t_1,\sigma(t_2)\rangle
		\end{equation}
		where $t_1\in V^{\otimes\lvert\tree\rvert}$ and $t_2\in W^{\otimes\lvert\tree\rvert}$ are representatives of $\mathbf t_1$ and $\mathbf t_2$ respectively. (Recall that we denote by $\Sym(\tree)$ both the symmetry group of $\tree$ and its embedding into $\mathfrak S_{\lvert\tree\rvert}$ which is induced by the enumeration $\rho_\tree$.) Then the following hold. 
	\begin{enumerate}[(i)]
		\item $\left(V^{\otimes\tree},W^{\otimes\tree},\langle\cdot,\cdot\rangle_\tree\right)$ is a dual pair of vector spaces.
		\item If $(V,W,\langle\cdot,\cdot\rangle)$ is a dual pair of locally convex vector spaces, then $\langle\cdot,\mathbf t_2\rangle_\tree$ can be extended continuously to $V^{\hat\otimes\tree}$ for all $\mathbf t_2\in W^{\otimes\tree}$ and $\left(V^{\hat\otimes\tree},W^{\otimes\tree},\langle\cdot,\cdot\rangle_\tree\right)$ is a dual pair of vector spaces.
	\end{enumerate}

	\end{lem}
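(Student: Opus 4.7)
My plan is first to handle the well-definedness of the pairing \eqref{eq:TreePairing} and to establish (i), and then to treat the topological extension in (ii).

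Well-definedness reduces to two checks. The equality of the two expressions in \eqref{eq:TreePairing} follows from the identity $\langle\sigma(a),b\rangle = \langle a,\sigma^{-1}(b)\rangle$ for the induced pairing on $V^{\otimes\lvert\tree\rvert}\times W^{\otimes\lvert\tree\rvert}$ combined with the fact that $\sigma\mapsto\sigma^{-1}$ is a bijection on the group $\Sym(\tree)$. Independence of representatives follows because for $x\in\ker(\sym_{\Sym(\tree)})$ we have $\sum_{\sigma\in\Sym(\tree)}\sigma(x) = 0$, so replacing $t_1$ by $t_1 + x$ leaves the sum unchanged (and similarly for $t_2$). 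For (i), I would first recall the classical fact that the induced pairing on $V^{\otimes k}\times W^{\otimes k}$ is itself nondegenerate, proved by induction on $k$: an element $a \in V^{\otimes k}$ with vanishing pairings is written using a linearly independent family in the first slot, and nondegeneracy of $\langle\cdot,\cdot\rangle$ together with the inductive hypothesis eliminates it. Given this, suppose $\mathbf t_1 \in V^{\otimes\tree}$ pairs to zero with every $\mathbf t_2\in W^{\otimes\tree}$. Choosing $\mathbf t_2 = [t_2]$ for arbitrary $t_2\in W^{\otimes\lvert\tree\rvert}$ yields
\[
    0 = \sum_{\sigma\in\Sym(\tree)} \langle\sigma(t_1), t_2\rangle = \lvert\Sym(\tree)\rvert\cdot\langle\sym_{\Sym(\tree)}(t_1), t_2\rangle,
\]
so nondegeneracy on $V^{\otimes\lvert\tree\rvert}\times W^{\otimes\lvert\tree\rvert}$ forces $\sym_{\Sym(\tree)}(t_1) = 0$, i.e.\ $\mathbf t_1 = 0$. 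Separation in the other slot is symmetric.

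For (ii), I obtain the continuous extension from the cross-seminorm assumption. Continuity of each functional $\langle\cdot,w_i\rangle$ on $V$ provides estimates $\lvert\langle v,w_i\rangle\rvert \leq C_i\xi_{j_i}(v)$, and multiplying over elementary tensors and using the cross-seminorm identity yields $\lvert\langle a, w_1\otimes\cdots\otimes w_k\rangle\rvert \leq \prod_i C_i\cdot(\xi_{j_1}\otimes\cdots\otimes\xi_{j_k})(a)$ for elementary $a$; by linearity (and passing via the injective seminorm, which is majorized by any cross-seminorm in the family) the same bound holds on all of $V^{\otimes k}$. Hence $\langle\cdot, t_2\rangle$ extends uniquely and continuously to $V^{\hat\otimes k}$ and descends to $V^{\hat\otimes\tree}$. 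Separation of $W^{\otimes\tree}$ by $V^{\hat\otimes\tree}$ is automatic from (i) since $V^{\otimes\tree}\subset V^{\hat\otimes\tree}$. The separation of $V^{\hat\otimes\tree}$ by $W^{\otimes\tree}$ reduces, via the same symmetrization computation as in (i), to showing that the functionals $\{\langle \cdot, t_2\rangle : t_2\in W^{\otimes\lvert\tree\rvert}\}$ separate points of $V^{\hat\otimes\lvert\tree\rvert}$; this uses the Hausdorffness of the cross-seminorm topology together with the embedding $W\hookrightarrow V'$ provided by the dual pair structure.

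The main obstacle I anticipate is precisely this last separation step. While the algebraic separation in (i) is elementary multilinear algebra, in the completion the functionals coming from $W^{\otimes k}$ may a priori be too meagre to detect all nonzero elements of $V^{\hat\otimes k}$. I would expect the proof to rely either on a weak density argument showing that elements of $V'$ can be approximated by elements of $W$ (in the topology of uniform convergence on appropriate bounded or compact sets of $V$), or on a direct analysis of the injective cross-seminorm, which already separates points of $V^{\hat\otimes k}$ via functionals $v_1'\otimes\cdots\otimes v_k'$ with $v_i'\in V'$, and for which the functionals coming from $W^{\otimes k}$ form a sufficiently rich subset.
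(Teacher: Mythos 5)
Your treatment of the well-definedness of \eqref{eq:TreePairing} and of part (i) is correct and follows essentially the paper's own route: the paper likewise reduces the separation property to nondegeneracy of the induced pairing on $V^{\otimes\lvert\tree\rvert}\times W^{\otimes\lvert\tree\rvert}$ via the identity $\sum_{\sigma\in\Sym(\tree)}\langle\sigma(t_1),t_2\rangle=\lvert\Sym(\tree)\rvert\,\langle\sym_{\Sym(\tree)}(t_1),t_2\rangle$, the only difference being that you prove the underlying fact about tensor powers of a dual pair by induction, whereas the paper cites it from Robertson--Robertson.

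In part (ii), however, there is a genuine gap, and it is precisely the one you flag yourself: you never establish that $W^{\otimes\tree}$ separates the points of the completion $V^{\hat\otimes\tree}$. You only name two candidate strategies (weak density of $W$ in $V'$ on suitable sets, or an analysis of the injective cross-seminorm), carry out neither, and neither is what the paper actually does. The paper's argument is short and purely duality-theoretic: equip $V^{\otimes\tree}$ with the weak topology $\tau_1$ generated by the functionals $\langle\cdot,\mathbf t_2\rangle_\tree$ for $\mathbf t_2\in W^{\otimes\tree}$. By part (i) this topology is Hausdorff and has $W^{\otimes\tree}$ as its topological dual; since a locally convex space and its completion have the same dual, and the dual of a Hausdorff locally convex space separates points, $W^{\otimes\tree}$ separates the $\tau_1$-completion of $V^{\otimes\tree}$. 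The continuity of the original pairing on $V\times W$ makes $\tau_1$ coarser than the cross-seminorm topology, so $V^{\hat\otimes\tree}$ is contained in the $\tau_1$-completion and inherits the separation. Separately, note that your continuity estimate for the extension of $\langle\cdot,\mathbf t_2\rangle_\tree$ leans on the assertion that the injective seminorm is majorised by every cross-seminorm in the chosen family; this is not among the paper's standing assumptions on the system of cross-seminorms, and the paper instead obtains the required continuity directly from the assumed continuity of $\langle\cdot,\cdot\rangle$ on $V\times W$.
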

	\begin{proof}
	Let us first show that \eqref{eq:TreePairing} does not depend on the choice of $t_1$ or $t_2$. Thus, let $\tilde t_1$  be different representatives of $\mathbf t_1$. Then we have
	\[
		\sum_{\sigma\in\Sym(\tree)}\langle \sigma(t_1),t_2\rangle-\sum_{\sigma\in\Sym(\tree)}\langle \sigma(\tilde t_1),t_2\rangle=\left\langle\sum_{\sigma\in\Sym(\tree)} \sigma(t_1-\tilde t_1),t_2\right\rangle=0
	\]
	and similarly for $t_2$. 
	
	Let us now prove (i). Fix $\mathbf t_2\in W^{\otimes\tree}$
	and assume that we have $\langle \mathbf t_1,\mathbf t_2\rangle_\tree=0$ for all $\mathbf t_1\in V^{\otimes\tree}$. Thus if we fix a representative $t_2$ of $\mathbf t_2$ we have
		\[
			0=\langle \mathbf t_1,\mathbf t_2\rangle_\tree=\sum_{\sigma\in\Sym(\tree)}\langle t_1,\sigma(t_2)\rangle=\left\langle t_1, \sum_{\sigma\in\Sym(\tree)} \sigma(t_2)\right\rangle
		\]
		for all $t_1\in V^{\otimes\lvert\tree\rvert}$ which implies $\sum_{\sigma\in\Sym(\tree)} \sigma(t_2)=0$, i.e. $\mathbf t_2=0$. This last implication holds because it is shown in \cite[Lem. VII/1]{Robertson_Robertson_1980} that $\left(V^{\otimes\lvert\tree\rvert},W^{\otimes\lvert\tree\rvert},\langle\cdot,\cdot\rangle\right)$ is a dual pair. Therefore the maps $\langle\mathbf t_1,\cdot\rangle_\tree$ with $\mathbf t_1\in V^{\otimes\tree}$ separate $W^{\otimes\tree}$. The converse statement is shown in exactly the same way and  hence the proof of (i) is finished.
		
		In order to prove (ii), we equip $V^{\otimes\tree}$ with the weak topology induced by $\langle\cdot,\cdot\rangle_\tree$, i.e. with the weakest topology which makes the maps $\langle\cdot,\mathbf t_2\rangle_\tree$ continuous for all $\mathbf t_2\in W^{\otimes\tree}$. Denote this topology by $\tau_1$. Thus $W^{\otimes\tree}$ is the topological dual of $V^{\otimes\tree}$ with respect to $\tau_1$ and therefore it is also the topological dual of the completion of $V^{\otimes\tree}$ under $\tau_1$. As a consequence $W^{\otimes\tree}$ separates this completion. Since we assume that the pairing between $V$ and $W$ is continuous with respect to their locally convex topologies, $\tau_1$ is weaker than any topology on $V^{\otimes\tree}$ that is induced by cross-norms as introduced in Section~\ref{sec:Topology}. Therefore the space $V^{\hat\otimes\tree}$ is contained in the $\tau_1$-completion of $V^{\otimes\tree}$ and hence $W^{\otimes\tree}$ also separates $V^{\hat\otimes\tree}$. Thus we have shown that $\left(V^{\hat\otimes\tree},W^{\otimes\tree},\langle\cdot,\cdot\rangle_\tree\right)$ is a dual pair.
	\end{proof}
	
	\begin{exmp}
		\[
			\left\langle\tikzTree{15}{\drawRoot\addnodeMiddle{a}{b}{v_1}\addnodeRight{b}{c}{v_3}\addnodeLeft{b}{d}{v_2}},\tikzTree{15}{\drawRoot\addnodeMiddle{a}{b}{w_1}\addnodeRight{b}{c}{w_3}\addnodeLeft{b}{d}{w_2}}\right\rangle
			=
			\langle v_1,w_1\rangle\langle v_2,w_2\rangle\langle v_3,w_3\rangle+
			\langle v_1,w_1\rangle\langle v_3,w_2\rangle\langle v_2,w_3\rangle
		\]
	\end{exmp}
	
	\begin{defn}
		Let $A$ and $B$ be two Hopf algebras. We say that they form a \emph{dual pair of Hopf algebras} with the pairing $\langle\cdot,\cdot\rangle:A\times B\rightarrow\R$ if $(A,B,\langle\cdot,\cdot\rangle)$ is a dual pair of vector spaces which satisfies the conditions
		 \begin{equation}\label{eq:HopfDualPair}
			 \langle\Delta(a),b_1\otimes b_2\rangle=\langle a,b_1b_2\rangle\qquad\text{and}\qquad\langle a_1\otimes a_2,\Delta(b)\rangle=\langle a_1a_2,b\rangle
		 \end{equation}
		for all $a,a_1,a_2\in A$ and $b,b_1,b_2\in B$.
	\end{defn}
	
	\begin{rem}\label{rem:HopfDualAlternative}
		An alternative way to formulate \eqref{eq:HopfDualPair} is the following. Consider the linear maps
		\[
			\varphi:A\rightarrow B^*:a\mapsto\langle a,\cdot\rangle\qquad\text{and}\qquad\psi:B\rightarrow A^*:b\mapsto\langle \cdot,b\rangle,
		\]
		where $A^*$ and $B^*$ denote the (algebraic) duals of $A$ and $B$ respectively. Then \eqref{eq:HopfDualPair} holds if and only if both  $\varphi$ and $\psi$ are algebra morphisms.
	\end{rem}
	
	\begin{lem}\label{lem:groupLike}
		Let $(A,B,\langle\cdot,\cdot\rangle)$ be a dual pair of Hopf algebras. Let $g\in A$ be group-like, i.e. such that $\Delta(g)=g\otimes g$. Then for all elements $b_1,\ldots,b_k\in B$ we have
		\[
			\langle g,b_1\cdots b_k\rangle=\langle g,b_1\rangle\cdots\langle g,b_k\rangle
		\]
	\end{lem}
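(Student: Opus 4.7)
The plan is straightforward induction on $k$, using only the group-like property of $g$ together with the duality relation \eqref{eq:HopfDualPair}.

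The base case $k=1$ is immediate from the definition of $\langle g,\cdot\rangle$. For $k=2$, the first identity in \eqref{eq:HopfDualPair} gives
\[
\langle g,b_1 b_2\rangle = \langle \Delta(g), b_1\otimes b_2\rangle = \langle g\otimes g, b_1\otimes b_2\rangle,
\]
where the second equality uses $\Delta(g)=g\otimes g$. Applying the definition of the induced bilinear form on $A\otimes B$ recalled in the remark preceding Lemma~\ref{lem:TreeProductSeparates}, this last expression equals $\langle g,b_1\rangle\,\langle g,b_2\rangle$.

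For the inductive step, assume the claim holds for $k-1$ factors and write $b_1\cdots b_k = b_1\cdot (b_2\cdots b_k)$. Applying the case $k=2$ to the pair $b_1$ and $b_2\cdots b_k$ yields
\[
\langle g, b_1\cdots b_k\rangle = \langle g,b_1\rangle\,\langle g,b_2\cdots b_k\rangle,
\]
and the inductive hypothesis applied to the second factor produces the claimed product formula.

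I do not anticipate any real obstacle: the statement is a formal consequence of the compatibility axiom between the pairing and the (co)product together with group-likeness, and it is simply the algebraic fact that group-like elements induce algebra homomorphisms $B\to\R$ via $\psi$ from Remark~\ref{rem:HopfDualAlternative}. Indeed, an alternative one-line proof would be to observe that $\psi(g)=\langle g,\cdot\rangle\in B^*$, being the image under an algebra morphism of a group-like element, is multiplicative; but the inductive argument above is arguably more self-contained.
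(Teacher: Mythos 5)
Your proof is correct and follows exactly the paper's argument: the $k=2$ case via $\langle g,b_1b_2\rangle=\langle\Delta(g),b_1\otimes b_2\rangle=\langle g\otimes g,b_1\otimes b_2\rangle=\langle g,b_1\rangle\langle g,b_2\rangle$, then induction (the paper leaves the inductive step implicit, which you spell out). No issues.
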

	\begin{proof}
		Let $k=2$. Then we have
		\[
			\langle g,b_1b_2\rangle=\langle \Delta(g),b_1\otimes b_2\rangle=\langle g\otimes g,b_1\otimes b_2\rangle=\langle g,b_1\rangle\langle g,b_2\rangle.
		\]
		The claim for $k>2$ follows by induction.
	\end{proof}
	
	\begin{thm}\label{thm:HopfDualPair}
		Let $(V,W,\langle\cdot,\cdot\rangle)$ be a dual pair of vector spaces. Define
		\[
			\langle\cdot,\cdot\rangle_\Tree:\Tree(V)\times\Tree(W)\rightarrow\R:(\mathbf t_1,\mathbf t_2)\mapsto \delta_{\tree_1,\tree_2}\langle\mathbf t_1,\mathbf t_2\rangle_{\tree_1}
		\]
		for $\mathbf t_1\in V^{\otimes\tree_1}$ and  $\mathbf t_2\in W^{\otimes\tree_2}$.
		Then the following hold.
		\begin{enumerate}[(i)]
		\item
		$
			\left(\Hi_{\mathrm{GL}}(V),\Hi_{\mathrm{CK}}(W),\langle\cdot,\cdot\rangle_\Tree\right)
		$
		is a dual pair of Hopf algebras. 
		\item If $(V,W,\langle\cdot,\cdot\rangle)$ is a dual pair of locally convex vector spaces, then $\langle\cdot,\mathbf t_2\rangle_\Tree$ can be continuously extended to $\widehat\Tree(V)$ for all $\mathbf t_2\in\Tree(W)$ and both
		$
			\left(\widehat\Hi_{\mathrm{GL}}(V),\Hi_{\mathrm{CK}}(W),\langle\cdot,\cdot\rangle_\Tree\right)
		$
		and 
		$
			\left(\widehat\Hi_{\mathrm{CK}}(V),\Hi_{\mathrm{GL}}(W),\langle\cdot,\cdot\rangle_\Tree\right)
		$
		are dual pairs of Hopf algebras.
		\end{enumerate}
	\end{thm}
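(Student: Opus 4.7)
The plan is to prove (i) and (ii) together by reducing all compatibility identities to elementary labelled trees and then extending to the topological setting by continuity and density. First I would check that $\langle\cdot,\cdot\rangle_\Tree$ separates $\Tree(V)$ and $\Tree(W)$. Since the pairing vanishes whenever the two arguments sit over distinct trees, it splits as a direct sum over $\tree\in\Tree$ of the tree-wise pairings of Lemma~\ref{lem:TreeProductSeparates}, so separation globally reduces to separation on each component; the same reasoning together with Lemma~\ref{lem:TreeProductSeparates}(ii) delivers the continuous extension of $\langle\cdot,\mathbf{t}_2\rangle_\Tree$ from $\Tree(V)$ to $\widehat\Tree(V)$ required in (ii).

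The technical core is to verify the two identities from \eqref{eq:HopfDualPair}, namely
\begin{equation*}
\langle\mathbf{t}\star\mathbf{s},\mathbf{u}\rangle_\Tree = \langle\mathbf{t}\otimes\mathbf{s},\Delta_{\mathrm{CK}}(\mathbf{u})\rangle_\Tree \qquad\text{and}\qquad \langle\Delta_{\mathrm{GL}}(\mathbf{t}),\mathbf{u}\otimes\mathbf{v}\rangle_\Tree = \langle\mathbf{t},\mathbf{u}\circ\mathbf{v}\rangle_\Tree,
\end{equation*}
on elementary labelled trees $\mathbf{t}\in V^{\otimes\tree}$, $\mathbf{s}\in V^{\otimes\stree}$, $\mathbf{u}\in W^{\otimes\mathfrak{u}}$, $\mathbf{v}\in W^{\otimes\mathfrak{v}}$. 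For the first identity I would set up a bijection between grafting maps $d\in\mathrm{Gr}(\tree,\stree)$ whose output shape matches $\mathfrak{u}$ and admissible cuts $C\in\mathrm{Cut}(\mathfrak{u})$ with $P^C(\mathfrak{u})$ shaped like $\tree$ and $R^C(\mathfrak{u})$ shaped like $\stree$: cutting $\mathfrak{u}$ at the images under $d$ of the children of $\mathrm{r}(\tree)$ inverts the grafting. This is the classical Hoffman-type duality of \cite{Hoffman_2003}; once the shapes match, both sides reduce to the same sum of products of scalars $\langle v_i,w_j\rangle$ read off the label tensors at corresponding vertices. For the second identity, the unique $\circ$-decompositions supplied by Lemma~\ref{lem:decomposition} and \eqref{eq:decompositionLabelled} turn each side into an explicit sum: the subsets $I\subset\{1,\dots,k\}$ appearing in $\Delta_{\mathrm{GL}}(\mathbf{t})$ correspond exactly to the partitions of the $\Tree_1$-factors of $\mathbf{u}\circ\mathbf{v}$ into those coming from $\mathbf{u}$ and those coming from $\mathbf{v}$, with the $\Sym$-symmetrisation baked into $\langle\cdot,\cdot\rangle_\tree$ reconciling the different enumerations. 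Once both identities hold on $\Tree(V)\times\Tree(W)$, continuity of $\star$, $\circ$, $\Delta_{\mathrm{GL}}$, $\Delta_{\mathrm{CK}}$ and of the extended pairing promote them to $\widehat\Tree(V)\times\Tree(W)$ by density, giving the pair $(\widehat\Hi_{\mathrm{GL}}(V),\Hi_{\mathrm{CK}}(W))$; the mirror pair $(\widehat\Hi_{\mathrm{CK}}(V),\Hi_{\mathrm{GL}}(W))$ follows by interchanging the roles of $V$ and $W$.

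The main obstacle I foresee is not the existence of the bijection in the first identity, which is classical, but the careful bookkeeping of symmetrisations in the labelled setting. The pairing $\langle\cdot,\cdot\rangle_\tree$ already sums over $\Sym(\tree)$, while $V^{\otimes\tree}$ is itself a quotient by $\ker(\sym_{\Sym(\tree)})$; meanwhile $\mathrm{Gr}(\tree,\stree)$ and $\mathrm{Cut}(\mathfrak{u})$ need not have literally the same cardinality, only up to action by suitable subsymmetries of the intermediate shapes. Tracking the enumerations $\rho$ through grafting and cutting and confirming that these symmetry factors cancel coherently is the technical crux; Lemma~\ref{lem:ChoiceOfReps}, which is designed precisely so that such choices are absorbed by the quotients, is the template I would follow to organise the bookkeeping.
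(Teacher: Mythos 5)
Your overall strategy --- reducing separation to the tree-wise pairings of Lemma~\ref{lem:TreeProductSeparates}, verifying \eqref{eq:HopfDualPair} on elementary labelled trees via a grafting/cutting correspondence, and passing to the completions by continuity and density --- is the same as the paper's. The gap is in the combinatorial core, which you correctly name as ``the technical crux'' but do not resolve. For a general tree $\tree$ there is no bijection between the grafting maps $d\in\mathrm{Gr}(\tree,\stree)$ producing a given shape $\frakU$ and the cuts $C\in\mathrm{Cut}(\frakU)$ with $P^C(\frakU)=\tree$ and $R^C(\frakU)=\stree$: the two sets differ by symmetry factors (already in the paper's first example $\lvert\mathrm{Gr}(\tree,\stree)\rvert=4$ while only three distinct trees result), and the pairing $\langle\cdot,\cdot\rangle_\tree$ contributes a further sum over $\Sym$ on each side. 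Asserting that these factors ``cancel coherently'' with Lemma~\ref{lem:ChoiceOfReps} as a template is a statement of what must be proved rather than a proof; that lemma only gives well-definedness of the operations, not the orbit counts needed here.

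The paper sidesteps this difficulty by a reduction you do not use: by Remark~\ref{rem:HopfDualAlternative}, condition \eqref{eq:HopfDualPair} says that $\mathbf t\mapsto\langle\mathbf t,\cdot\rangle_\Tree$ and $\mathbf s\mapsto\langle\cdot,\mathbf s\rangle_\Tree$ are algebra morphisms, and since both $\Hi_{\mathrm{GL}}(V)$ and $\Hi_{\mathrm{CK}}(V)$ are generated as algebras by $\Tree_1(V)$, it suffices to verify $\langle\mathbf t_1\star\mathbf t_2,\mathbf s\rangle_\Tree=\langle\mathbf t_1\otimes\mathbf t_2,\Delta_{\mathrm{CK}}(\mathbf s)\rangle_\Tree$ and its $\circ$/$\Delta_{\mathrm{GL}}$ analogue for $\tree_1\in\Tree_1$. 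In that case every relevant grafting map is determined by a single vertex $v$ of $\tree_2$ and every relevant cut by a single vertex $w$ of $\stree$; both sides can be rewritten as sums over full symmetry groups divided by stabiliser orders, and everything reduces to the single counting identity $\lvert\mathrm{Fix}(w,\stree)\rvert=\lvert\mathrm{Fix}(v,\tree_2)\rvert\,\lvert\Sym(\tree_1)\rvert$. To complete your argument you should either adopt this reduction or carry out the explicit orbit--stabiliser computation for arbitrary $\tree$, which is substantially more delicate.
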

	\begin{proof}
		We start with the proof of (i), which means we need to show two things. 
		Firstly we have to make sure that $\Tree(V)$ and $\Tree(W)$ separate each other. 
This follows immediately from Lemma~\ref{lem:TreeProductSeparates}. Secondly we 
have to show that the pairing satisfies \eqref{eq:HopfDualPair}. This has been 
shown in \cite[Proposition 4.4]{Hoffman_2003} for the special case $V=\R$, i.e. 
for unlabelled trees.

		We will extend their arguments to the labelled case. 
		
		Let $\tree_1,\tree_2,\stree\in\Tree$ be trees and let $\mathbf t_1\in V^{\otimes\tree}$, $\mathbf t_2\in V^{\otimes\tree_2}$ and $\mathbf s\in W^{\otimes\stree}$ be elementary trees with elementary representatives $t_1$, $t_2$ and $s$ respectively. 
		
		It is an easy exercise to show that both $\Hi_{\mathrm{GL}}(V)$ and $\Hi_{\mathrm{CK}}(V)$ are generated as an algebra by the elements of $\Tree_1(V)$. In light of Remark~\ref{rem:HopfDualAlternative} it is therefore sufficient to show
		\begin{equation*}
			\langle\mathbf t_1\star \mathbf t_2,\mathbf s\rangle_\Tree=\langle\mathbf t_1\otimes\mathbf t_2,\Delta_{\mathrm{CK}}(\mathbf s)\rangle_\Tree
			\quad\text{and}\quad
			\langle \mathbf t_1 \circ \mathbf t_2, \mathbf s\rangle_\Tree
			= \langle \mathbf t_1 \otimes \mathbf t_2,\Delta_{\mathrm{GL}}(\mathbf s)\rangle_\Tree
		\end{equation*}
		for $\tree_1\in\Tree_1$. By definition the first equality is equivalent to 
		\begin{equation}		
		\begin{aligned}\label{eq:proofDuality}
			&\sum_{d\in\mathrm{Gr}(\tree_1,\tree_2;\stree)}\sum_{\sigma\in\Sym(\stree)}\langle t_1\stackrel{d}{\rightharpoonup}t_2,\sigma(s)\rangle\\
			=&
			\sum_{C\in\mathrm{Cut}(\stree;\tree_1,\tree_2)}\sum_{\sigma_1\in\Sym(\tree_1)}\sum_{\sigma_2\in\Sym(\tree_2)}\langle\sigma_1(t_1)\otimes\sigma_2(t_2),\mathbf P^C(s)\otimes\mathbf R^C(s)\rangle,
		\end{aligned}
		\end{equation}
		where we denote by $\mathrm{Gr}(\tree_1,\tree_2;\stree)$ the set of all grafting maps $d$ such that $\tree_1\stackrel{d}{\rightharpoonup}\tree_2=\stree$ and by $\mathrm{Cut}(\stree;\tree_1,\tree_2)$ the set of all cuts $C$ such that $P^C(\stree)=\tree_1$ and $R^C(\stree)=\tree_2$.
		
		Denote by $c$ the unique child of the root of $\tree_1\in\Tree_1$. Any $d\in\mathrm{Gr}(\tree_1,\tree_2)$ is uniquely determined by the vertex of $\tree_2$ to which $c$ is attached. Let us now rewrite the left-hand side of \eqref{eq:proofDuality}. Instead of summing over all $d\in\mathrm{Gr}(\tree_1,\tree_2;\stree)$ we can alternatively fix an arbitrary $\bar d\in\mathrm{Gr}(\tree_1,\tree_2;\stree)$, denote the associated vertex of $\tree_2$ by $v$ and rewrite the expression as
		\begin{equation}\label{eq:dualityGraftRewritten}
		\frac{1}{\lvert\mathrm{Fix}(v,\tree_2)\rvert}\sum_{\pi\in \Sym(\tree_2)}\sum_{\sigma\in\Sym(\stree)}\langle t_1\stackrel{\bar d}{\rightharpoonup}\pi(t_2),\sigma(s)\rangle,
		\end{equation}
		where $\mathrm{Fix}(v,\tree_2):=\{\sigma\in\Sym(\tree_2):\sigma(v)=v\}$. In order to verify that \eqref{eq:dualityGraftRewritten} is indeed equal to the left hand side of \eqref{eq:proofDuality} we observe that
		\[
			\sum_{\sigma\in\Sym(\stree)}\langle t_1\stackrel{\bar d}{\rightharpoonup}\pi(t_2),\sigma(s)\rangle=
			\sum_{\sigma\in\Sym(\stree)}\langle t_1\stackrel{\bar d}{\rightharpoonup}\tilde\pi(t_2),\sigma(s)\rangle
		\]
		holds for $\pi,\tilde\pi\in\Sym(\tree_2)$ whenever $\pi^{-1}(v)=\tilde\pi^{-1}(v)$. For fixed $\pi\in\Sym(\tree_2)$ there are $\lvert\mathrm{Fix}(v,\tree_2)\rvert$ ways to choose $\tilde\pi$ in such a way. 
		
		On the other hand, any cut $C\in\mathrm{Cut}(\stree;\tree_1,\tree_2)$ contains exactly one vertex and hence we can choose $\overline C\in\mathrm{Cut}(\stree;\tree_1,\tree_2)$ such that for the canonical injection $\phi_{\overline C,P}:\mathrm{v}(\tree_1)\rightarrow\mathrm{v}(\stree)$ we have that $w:=\phi_{\overline C,R}(c)$ is the unique vertex which defines $\overline C$. Thus we can rewrite the right-hand side of \eqref{eq:proofDuality} as
		\begin{equation}\label{eq:dualityCutRewritten}		
			\frac{1}{\lvert\mathrm{Fix}(w,\stree)\rvert}
			\sum_{\tau\in\Sym(\stree)}
			\sum_{\sigma_1\in\Sym(\tree_1)}
			\sum_{\sigma_2\in\Sym(\tree_2)}
			\left\langle\sigma_1(t_1)\otimes\sigma_2(t_2),\mathbf{P}^{\overline C}(\tau(s))\otimes\mathbf{R}^{\overline C}(\tau(s))\right\rangle,
		\end{equation}
		where $\mathrm{Fix}(w,\stree):=\{\sigma\in\Sym(\stree):\sigma(w)=w\}$. This can verified as in the previous paragraph. Since $\phi_{\overline C,P}$ induces a canonical embedding of $\Sym(\tree_1)$ into $\Sym(\stree)$ as a subgroup we can rewrite \eqref{eq:dualityCutRewritten} as
		\begin{align*}
			&\frac{\lvert\Sym(\tree_1)\rvert}{\lvert\mathrm{Fix}(w,\stree)\rvert}
			\sum_{\tau\in\Sym(\stree)}
			\sum_{\sigma_2\in\Sym(\tree_2)}
			\left\langle t_1\otimes\sigma_2(t_2),\mathbf{P}^{\overline C}(\tau(s))\otimes\mathbf{R}^{\overline C}(\tau(s))\right\rangle\\
		=&\frac{\vert\Sym(\tree_1)\rvert}{\lvert\mathrm{Fix}(w,\stree)\rvert}
		\sum_{\tau\in\Sym(\stree)}
		\sum_{\sigma_2\in\Sym(\tree_2)}
		\left\langle t_1\stackrel{\overline d}\rightharpoonup\sigma_2(t_2),\tau(s)\right\rangle
		\end{align*}
		One easily verifies that $\lvert\mathrm{Fix}(w,\stree)\rvert=\lvert\mathrm{Fix}(v,\tree_2)\rvert\lvert\Sym(\tree_1)\rvert$ holds, which shows \eqref{eq:proofDuality}. 
		
		The second condition in \eqref{eq:HopfDualPair}, i.e. the relationship between $\circ$ and $\Delta_{\mathrm{GL}}$, is shown analogously.

		Finally, (ii) is an easy consequence of (i). 
	\end{proof}
	
	\begin{cor}
		Let $V$ be a locally convex vector space and let $\mathbf t\in\widehat\Hi_\mathrm{GL}(V)$ be group-like i.e. such that it satisfies $\Delta_{\mathrm{GL}}(\mathbf t)=\mathbf t\otimes\mathbf t$. Then we have
		\[
			 \mathbf t = \exp_\circ(\pi_1(\mathbf t)):=\sum_{k=0}^\infty\frac{(\pi_1(\mathbf t))^{\circ k}}{k!}.
		\]
	\end{cor}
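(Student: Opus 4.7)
The plan is to construct $\tilde{\mathbf{t}} := \exp_\circ(\pi_1(\mathbf{t}))$, show that it is itself group-like for $\Delta_{\mathrm{GL}}$, and then conclude $\mathbf{t} = \tilde{\mathbf{t}}$ by pairing against the Connes--Kreimer side of the duality of Section~\ref{sec:Duality}.

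For the first step I observe that the series defining $\tilde{\mathbf{t}}$ converges in the product topology on $\widehat{\Tree}(V)$: since $\pi_1(\mathbf{t})^{\circ k}$ lies in $\widehat{\Tree}_k(V)$, each canonical projection $\pi_j$ picks out exactly one summand of the series. For the group-like verification I note that any $u \in \Tree_1(V)$ has a trivial $\Tree_1$-decomposition (a single factor), so $u^{\circ k}$ decomposes as $u \circ \cdots \circ u$ with $k$ factors, and the defining subset-sum formula for $\Delta_{\mathrm{GL}}$ becomes
\begin{equation*}
\Delta_{\mathrm{GL}}(u^{\circ k}) = \sum_{j=0}^k \binom{k}{j}\, u^{\circ j} \otimes u^{\circ (k-j)}.
\end{equation*}
Linearity plus continuity of $\circ$ and $\Delta_{\mathrm{GL}}$ propagate this identity to $u \in \widehat{\Tree}_1(V)$. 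Dividing by $k!$, summing over $k$, and recognising a Cauchy-type product yields $\Delta_{\mathrm{GL}}(\tilde{\mathbf{t}}) = \tilde{\mathbf{t}} \otimes \tilde{\mathbf{t}}$.

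Next I pick any locally convex vector space $W$ in duality with $V$ — for instance $W := V'$ with the weak-$*$ topology, which gives a dual pair of locally convex vector spaces by Hahn--Banach — and invoke Theorem~\ref{thm:HopfDualPair}(ii) to obtain the dual pair of Hopf algebras $(\widehat{\Hi}_{\mathrm{GL}}(V), \Hi_{\mathrm{CK}}(W), \langle\cdot,\cdot\rangle_\Tree)$. Lemma~\ref{lem:groupLike} applied to both group-like elements $\mathbf{t}$ and $\tilde{\mathbf{t}}$ then gives
\begin{equation*}
\langle \mathbf{t}, \mathbf{s}_1 \circ \cdots \circ \mathbf{s}_k\rangle_\Tree = \prod_{i=1}^k \langle \mathbf{t}, \mathbf{s}_i\rangle_\Tree, \qquad \langle \tilde{\mathbf{t}}, \mathbf{s}_1 \circ \cdots \circ \mathbf{s}_k\rangle_\Tree = \prod_{i=1}^k \langle \tilde{\mathbf{t}}, \mathbf{s}_i\rangle_\Tree
\end{equation*}
for every $\mathbf{s}_1,\ldots,\mathbf{s}_k \in \Tree(W)$.

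To finish I compare the two. By construction $\pi_1(\tilde{\mathbf{t}}) = \pi_1(\mathbf{t})$, because only the $k=1$ term of $\exp_\circ$ contributes to $\Tree_1(V)$. Since $\langle\cdot,\cdot\rangle_\Tree$ is supported on matching tree types, for each $\mathbf{s}_i \in W^{\otimes\stree_i}$ with $\stree_i \in \Tree_1$ we have $\langle\mathbf{t}, \mathbf{s}_i\rangle_\Tree = \langle\pi_1(\mathbf{t}), \mathbf{s}_i\rangle_\Tree = \langle\pi_1(\tilde{\mathbf{t}}), \mathbf{s}_i\rangle_\Tree = \langle\tilde{\mathbf{t}}, \mathbf{s}_i\rangle_\Tree$, and combining this with the displayed identities shows $\langle\mathbf{t}, \mathbf{s}\rangle_\Tree = \langle\tilde{\mathbf{t}}, \mathbf{s}\rangle_\Tree$ on every elementary labelled tree $\mathbf{s}$, using the decomposition \eqref{eq:decompositionLabelled} into $\Tree_1(W)$-factors. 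Bilinearity extends the equality to all of $\Tree(W)$, whereupon the separation property of the dual pair forces $\mathbf{t} = \tilde{\mathbf{t}}$. The main delicate point is the group-like verification of $\tilde{\mathbf{t}}$, where the interaction of the $\circ$-operation with the symmetrisation inherent in $V^{\hat{\otimes}\tree}$ must be handled with some care; once past it, the duality machinery of Section~\ref{sec:Duality} accomplishes the rest essentially for free.
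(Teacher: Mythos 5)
Your proof is correct, and it differs from the paper's in one substantive step. Both arguments finish identically: pair against an elementary $\mathbf s=\mathbf s_1\circ\cdots\circ\mathbf s_k\in\Tree(V')$, use Lemma~\ref{lem:groupLike} together with Theorem~\ref{thm:HopfDualPair} and the observation that $\langle\mathbf t,\mathbf s_i\rangle_\Tree=\langle\pi_1(\mathbf t),\mathbf s_i\rangle_\Tree$ for $\mathbf s_i\in\Tree_1(V')$, and conclude by separation. Where you diverge is in evaluating the pairing of $\exp_\circ(\pi_1(\mathbf t))$ with $\mathbf s$: the paper computes $\langle(\pi_1(\mathbf t))^{\circ k},\mathbf s_1\circ\cdots\circ\mathbf s_k\rangle_\Tree=k!\,\prod_i\langle\pi_1(\mathbf t),\mathbf s_i\rangle_\Tree$ directly from the explicit formula \eqref{eq:TreePairing} and the multinomial formula, obtaining $\pi_k(\mathbf t)=\tfrac1{k!}(\pi_1(\mathbf t))^{\circ k}$ componentwise, whereas you first prove that $\exp_\circ(\pi_1(\mathbf t))$ is itself group-like via the binomial identity for $\Delta_{\mathrm{GL}}(u^{\circ k})$ and then apply Lemma~\ref{lem:groupLike} a second time. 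Your route is the more conceptual one (primitive elements exponentiate to group-like elements) and localises the real work in the multiplicativity of $\Delta_{\mathrm{GL}}$ with respect to $\circ$; the paper's is more computational but never has to apply $\Delta_{\mathrm{GL}}$ to an infinite series. Two details in your group-like verification deserve to be spelled out, though neither is a gap: \emph{linearity} alone does not propagate the binomial identity from elementary $u$ to general $u\in\Tree_1(V)$, since $u\mapsto u^{\circ k}$ is not linear --- you need the multinomial expansion of $\bigl(\sum_\alpha u_\alpha\bigr)^{\circ k}$, i.e.\ that $\Delta_{\mathrm{GL}}$ is a $\circ$-algebra morphism on the subalgebra generated by $\Tree_1(V)$; and the interchange of $\Delta_{\mathrm{GL}}$ with the infinite sum should be justified by noting that in the product topology each component $V^{\hat\otimes\tree}\hat\otimes V^{\hat\otimes\stree}$ of $\widehat\Tree(V)\hat\otimes\widehat\Tree(V)$ receives only finitely many terms of the double series.
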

	\begin{proof}
		Let $V'$ be the continuous dual of $V$, let $\stree\in\Tree$ and let $\mathbf s\in(V')^{\otimes\stree}$ be an elementary labelled tree with decomposition $\mathbf s=\mathbf s_1\circ\cdots\circ\mathbf s_k$ as in \eqref{eq:decompositionLabelled}. Then we have
		\begin{align*}
			\langle\mathbf t,\mathbf s\rangle_\Tree
			&=\langle\mathbf t, \mathbf s_1\circ\cdots\circ \mathbf s_k\rangle_\Tree\\
			&=\langle\mathbf t,\mathbf s_1\rangle_\Tree\cdots\langle\mathbf t,\mathbf s_k\rangle_\Tree\\
			&=\langle \pi_1(\mathbf t),\mathbf s_1\rangle_\Tree\cdots\langle \pi_1(\mathbf t),\mathbf s_k\rangle_\Tree,
		\end{align*}
	where we have used first Lemma~\ref{lem:groupLike} in combination with Theorem~\ref{thm:HopfDualPair} and then the fact that $\mathbf s_i\in\Tree_1(V)$ for all $i\in\{1,\ldots,k\}$. On the other hand we have
	\begin{align*}
		\left\langle(\pi_1(\mathbf t))^{\circ k}, \mathbf s\right\rangle_\Tree		
		=&\left\langle (\pi_1(\mathbf t))^{\circ k}, \mathbf s_1\circ\cdots\circ \mathbf s_k\right\rangle_\Tree\\
		=&k!\langle \pi_1(\mathbf t),\mathbf s_1\rangle_\Tree\cdots\langle \pi_1(\mathbf t),\mathbf s_k\rangle_\Tree,
	\end{align*}	
	which follows from \eqref{eq:TreePairing} and the multinomial formula.	
	
	Since we have chosen an arbitrary $\mathbf s$ and since $\Tree(V')$ separates the points of $\widehat\Tree(V)$ this shows that we have $\pi_k(\mathbf t)=\frac1{k!}\left(\pi_1(\mathbf t)\right)^{\circ k}$ for all $k\in\Zp$. Hence the claim follows.
	\end{proof}
	
	\begin{cor}\label{cor:truncatedDuality}
		Let $V$ be a locally convex vector space and let $\mathbf t\in\widehat\Hi_\mathrm{GL}(V)$ be group-like i.e. such that it satisfies $\Delta_{\mathrm{GL}}(\mathbf t)=\mathbf t\otimes\mathbf t$. Then we have
		\[
			 \pi_k^{(n)}(\mathbf t) = \frac{\left(\pi_1^{(n)}(\mathbf t)\right)^{\circ k}}{k!}+\mathfrak r,
		\]
		where $\mathfrak r$ is such that $\pi^{(n)}(\mathfrak r)=0$.
	\end{cor}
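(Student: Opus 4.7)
My plan is to reduce the statement to a computation within $\widehat\Tree(V)$ and then exploit that $I^{(n)}=\ker\pi^{(n)}$ is an ideal for $\circ$. The previous corollary already gives $\pi_k(\mathbf t)=\tfrac{1}{k!}(\pi_1(\mathbf t))^{\circ k}$. Since $\pi_k^{(n)}=\pi^{(n)}\circ\pi_k=\pi_k\circ\pi^{(n)}$, I would apply $\pi^{(n)}$ to both sides of this identity to obtain
\[
\pi_k^{(n)}(\mathbf t)=\pi^{(n)}\!\left(\tfrac{1}{k!}(\pi_1(\mathbf t))^{\circ k}\right).
\]
So it suffices to show that the difference $(\pi_1(\mathbf t))^{\circ k}-(\pi_1^{(n)}(\mathbf t))^{\circ k}$ lies in $I^{(n)}$: setting $\mathfrak r:=\pi_k^{(n)}(\mathbf t)-\tfrac{1}{k!}(\pi_1^{(n)}(\mathbf t))^{\circ k}$, the displayed identity combined with the vanishing of $\pi^{(n)}$ on $I^{(n)}$ will immediately yield $\pi^{(n)}(\mathfrak r)=0$, which is exactly what is required.

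To handle the difference, I would decompose $\pi_1(\mathbf t)=\pi_1^{(n)}(\mathbf t)+r$, where $r$ consists precisely of the components of $\pi_1(\mathbf t)$ supported on trees of degree strictly greater than $n$, and therefore lies in $I^{(n)}$. Because $\circ$ is commutative (it simply identifies the two roots, so $\tree\circ\stree=\stree\circ\tree$), an ordinary binomial expansion is valid, giving
\[
(\pi_1^{(n)}(\mathbf t)+r)^{\circ k}-(\pi_1^{(n)}(\mathbf t))^{\circ k}=\sum_{j=0}^{k-1}\binom{k}{j}(\pi_1^{(n)}(\mathbf t))^{\circ j}\circ r^{\circ(k-j)}.
\]
Every summand on the right contains at least one factor of $r\in I^{(n)}$, and since $I^{(n)}$ is a (two-sided) $\circ$-ideal of $\widehat\Tree(V)$, each summand lies in $I^{(n)}$. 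The whole difference therefore lies in $I^{(n)}$, which is exactly what the first paragraph reduced the problem to.

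The only technical subtlety I expect is that $\pi_1(\mathbf t)$ generally lies in the completion $\widehat\Tree(V)$ rather than in the algebraic direct sum, so one should verify that the finite power $(\pi_1(\mathbf t))^{\circ k}$ and the binomial expansion make sense there. This is not a real obstacle: the product $\circ$ is jointly continuous (as was already implicitly used when defining $\exp_\circ$ in the previous corollary), and the binomial expansion has only finitely many summands, so the identity transfers verbatim from the algebraic tree algebra to $\widehat\Tree(V)$.
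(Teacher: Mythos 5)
Your proposal is correct and follows essentially the same route as the paper, whose entire proof is the one-line remark that the statement ``follows immediately from the previous corollary.'' You have simply made the word ``immediately'' explicit: decomposing $\pi_1(\mathbf t)=\pi_1^{(n)}(\mathbf t)+r$ with $r\in I^{(n)}$ and using commutativity of $\circ$ together with the fact that $I^{(n)}$ is a $\circ$-ideal is exactly the intended (and correct) way to pass from $\pi_k(\mathbf t)=\tfrac{1}{k!}(\pi_1(\mathbf t))^{\circ k}$ to the truncated statement.
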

	\begin{proof}
		This follows immediately from the previous corollary.
	\end{proof}
	
	\section{Change of variables for differential equations driven by branched rough paths in arbitrary dimension}\label{sec:RoughPaths}
	
	For a general introduction to the theory of rough paths we refer to \cite{Lyons_Caruana_Levy_2007,Friz_Hairer_2014,Friz_Victoir_2010}. In this section we assume that $E$ is a Banach space, which implies that $\widehat\Tree^{(k)}(E)$ is also Banach for all $k\in\Zpp$. For $T>0$ define $\Delta_T:=\{(s,t)\in[0,T]:s<t\}$. By a \emph{control function} or simply \emph{control} we mean a continuous function $\omega:\Delta_T\rightarrow\R$ which satisfies $\omega(s,s)=0$ and $\omega(s,t)+\omega(t,u)\leq\omega(s,u)$ for all $0\leq s\leq t\leq u\leq T$.

	\begin{defn}
		Let $T>0$, let $p\geq 1$ and let $\omega$ be a control function. A continuous map $\bx:\Delta_T\rightarrow\widehat\Tree^{([p])}(E)$ is called a \emph{branched $p$-rough path controlled by $\omega$} if it satisfies
		\begin{enumerate}[(i)]
			\item $\bx_{s,u}=\bx_{s,t}\star\bx_{t,u}$
			\item $\bx_{s,t}=\pi^{([p])}(\mathbf g)$ for some $\mathbf g\in \widehat\Tree(E)$ with $\Delta_{\mathrm{GL}}(\mathbf g)=\mathbf g\otimes\mathbf g$
			\item $\left\lVert\pi^k(\bx_{s,t})\right\rVert\leq\omega(s,t)^{\frac kp}$
		\end{enumerate}
		for all $0\leq s\leq t\leq u\leq T$ and $k\in\{1,\ldots,[p]\}$. 
	\end{defn}	
	
	Let $F$ be another Banach space and let $V:E\times F\rightarrow F$ be a continuous map which is linear in the first argument and smooth (in the Fr\'echet sense) in the second argument. There are two ways to view such a map. The one that is chosen in \cite{Lyons_Caruana_Levy_2007} is to interpret $V$ as a map from $F$ to the space of linear maps from $E$ to $F$. However, we prefer to view $V$ as a continuous linear map from $E$ into the space of smooth vector fields on $F$. To emphasise this point of view we write $V_e$ instead of $V(e,\cdot)$ for $e\in E$. Note that the the vector fields $V_e$ for $e\in E$ are \emph{kinetic vector fields} in the language of \cite{Kriegl_Michor_1997}.
	
	Let furthermore $f:F\rightarrow G$ be a smooth map into some Banach space $G$. Then for $y\in F$ the linear map
	\[
		\Psi_{V,f,y}:\widehat\Tree(E)\rightarrow G
	\]
	is constructed in the following way. It maps a labelled tree $\mathbf t$ to a product of derivatives of $f$ and $V$, evaluated at $y$. The root of $\mathbf t$ corresponds to $f$ and a vertex labelled by $e\in E$ corresponds to the vector field $V_e$. The number of children of a vertex indicates how often the corresponding term has to be differentiated. The terms corresponding to the children are then plugged into the derivative.
	\begin{exmp}
		\[
		\Psi_{V,f,y}\left(\kern-.5em\tikzTree{15}{\drawRoot\addnodeLeftTwo{a}{b}{e_1}\addnodeRightTwo{a}{c}{e_2}\addnodeLeft{b}{d}{e_3}\addnodeMiddle{b}{e}{e_4}\addnodeRight{b}{f}{e_5}}\right)
		=
		f''(y)[V_{e_1}'''(y)[V_{e_3}(y),V_{e_4}(y),V_{e_5}(y)],V_{e_2}(y)]		
		\]
	\end{exmp}
	
	There is a particular choice of $f$ which will become important. For any positive integer $k$ we define the map 
	\[
		\id^k:F\rightarrow F^{\otimes k}:x\mapsto x^{\otimes k}.
	\]
 The maps $\Psi_{V,\id^k,y}$ have two important properties. First they satisfy 
 	\begin{equation}\label{eq:property1}
		\Psi_{V,f,y}(\mathbf t)=\frac1{k!}f^{(k)}(y)[\Psi_{V,\id^k,y}(\mathbf t)]
	\end{equation} for all $\mathbf t\in\widehat\Tree(E)$. Second they are compatible with the $\circ$-product of labelled trees in the sense that for two trees $\mathbf t_1\in\widehat\Tree_k(E)$ and $\mathbf t_2\in\widehat\Tree_l(E)$ we have
	\begin{equation}\label{eq:property2}
		\Psi_{V,\id^{k+l},y}(\mathbf t_1\circ\mathbf t_2)=\frac{(k+l)!}{k!l!}\sym_{\mathfrak S_{k+l}}\left(\Psi_{V,\id^k,y}(\mathbf t_1)\otimes\Psi_{V,\id^l,y}(\mathbf t_2)\right).
	\end{equation}
	
	\begin{thm}\label{thm:changeOfVariable}
		Let $V:E\times F\rightarrow F$ be as above. Let furthermore $\bx$ be a branched $p$-rough path controlled by $\omega$ and let $y:[0,T]\rightarrow F$ be a continuous path. Then the following are equivalent.
		\begin{enumerate}[(i)]
			\item There exists a function $\theta:\R\rightarrow\R$ with $\theta(h)/h\rightarrow0$ for $h\rightarrow0$ such that
			\begin{equation}
				\label{eq:changeOfVariableId}
				\lVert y_t-\Psi_{V,\id,y_s}(\bx_{s,t})\rVert\leq\theta(\omega(s,t))
			\end{equation}
		holds for all $(s,t)\in\Delta_T$.
			\item For every Banach space $G$ and every smooth $f:F\rightarrow G$ there exists a function $\theta_f:\R\rightarrow\R$ with $\theta_f(h)/h\rightarrow0$ for $h\rightarrow0$ such that
			\begin{equation}
				\label{eq:changeOfVariableF}
				\lVert f(y_t)-\Psi_{V,f,y_s}(\bx_{s,t})\rVert\leq\theta_f(\omega(s,t))
			\end{equation}
			holds for all $(s,t)\in\Delta_T$. 
		\end{enumerate}
	\end{thm}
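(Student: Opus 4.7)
The implication (ii) $\Rightarrow$ (i) is immediate: take $G = F$ and $f = \id_F$, so that $\Psi_{V,f,y_s}(\bx_{s,t}) = \Psi_{V,\id,y_s}(\bx_{s,t})$ and $f(y_t) = y_t$, whence \eqref{eq:changeOfVariableF} reduces to \eqref{eq:changeOfVariableId}.

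For (i) $\Rightarrow$ (ii), I would first extract pointwise bounds on the increments $y_t - y_s$. Since $\id^{(k)}$ vanishes for $k \geq 2$, property \eqref{eq:property1} yields $\Psi_{V,\id,y_s}(\bx_{s,t}) = y_s + \mathbf{u}_{s,t}$ with $\mathbf{u}_{s,t} := \Psi_{V,\id,y_s}(\pi_1(\bx_{s,t}))$. The rough-path bounds on $\pi^k(\bx_{s,t})$ together with uniform operator bounds on $\Psi_{V,\id,y_s}$ (the latter coming from smoothness of $V$ and compactness of $y([0,T])$) give $\lVert \mathbf{u}_{s,t}\rVert \leq C\,\omega(s,t)^{1/p}$, and hypothesis (i) then implies both $\lVert y_t - y_s\rVert \leq C\,\omega(s,t)^{1/p}$ and the refined estimate $\lVert y_t - y_s - \mathbf{u}_{s,t}\rVert \leq \theta(\omega(s,t))$.

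The main step is a term-by-term comparison of two expansions. For arbitrary smooth $f$, I would Taylor expand to order $[p]$ around $y_s$,
\[
	f(y_t) = \sum_{k=0}^{[p]} \tfrac{1}{k!} f^{(k)}(y_s)\bigl[(y_t - y_s)^{\otimes k}\bigr] + R_f(s,t),
\]
with $\lVert R_f(s,t)\rVert \leq C\,\lVert y_t - y_s\rVert^{[p]+1} = O(\omega(s,t)^{([p]+1)/p}) = o(\omega(s,t))$. Property \eqref{eq:property1} gives the parallel decomposition $\Psi_{V,f,y_s}(\bx_{s,t}) = \sum_{k=0}^{[p]} \tfrac{1}{k!} f^{(k)}(y_s)\bigl[\Psi_{V,\id^k,y_s}(\pi_k(\bx_{s,t}))\bigr]$. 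Since the multilinear forms $f^{(k)}(y_s)$ are uniformly bounded for $y_s \in y([0,T])$, it then suffices to prove, for each $k \in \{0, \ldots, [p]\}$, that
\[
	\Psi_{V,\id^k,y_s}(\pi_k(\bx_{s,t})) = (y_t - y_s)^{\otimes k} + o(\omega(s,t)).
\]

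The key algebraic input is provided by Corollary~\ref{cor:truncatedDuality}. An induction on $k$ based on \eqref{eq:property2} and the symmetry of $\mathbf{u}_{s,t}^{\otimes k}$ gives $\Psi_{V,\id^k,y_s}(\pi_1(\bx_{s,t})^{\circ k}) = k!\,\mathbf{u}_{s,t}^{\otimes k}$; applying the corollary to the group-like lift $\mathbf{g}$ of $\bx_{s,t}$ with $n = [p]$ yields $\pi_k(\bx_{s,t}) = \tfrac{1}{k!}\pi^{([p])}(\pi_1(\bx_{s,t})^{\circ k})$. Combining these,
\[
	\Psi_{V,\id^k,y_s}(\pi_k(\bx_{s,t})) = \mathbf{u}_{s,t}^{\otimes k} - \tfrac{1}{k!}\Psi_{V,\id^k,y_s}\bigl((\id - \pi^{([p])})(\pi_1(\bx_{s,t})^{\circ k})\bigr),
\]
where the correction involves only trees of total degree $> [p]$ and hence has norm $O(\omega(s,t)^{([p]+1)/p}) = o(\omega(s,t))$. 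A telescoping estimate then gives $\lVert (y_t - y_s)^{\otimes k} - \mathbf{u}_{s,t}^{\otimes k}\rVert \leq k\,\lVert y_t - y_s - \mathbf{u}_{s,t}\rVert\,(\lVert y_t - y_s\rVert + \lVert \mathbf{u}_{s,t}\rVert)^{k-1} \leq C\,\theta(\omega(s,t))\,\omega(s,t)^{(k-1)/p} = o(\omega(s,t))$, closing the comparison. The main obstacle is precisely this algebraic identification of $\pi_k(\bx_{s,t})$ with $\tfrac{1}{k!}\mathbf{u}_{s,t}^{\otimes k}$ via Corollary~\ref{cor:truncatedDuality} and properties \eqref{eq:property1}--\eqref{eq:property2}; once that is in hand, the rest is standard Taylor-remainder bookkeeping.
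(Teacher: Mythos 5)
Your proposal is correct and follows essentially the same route as the paper's proof: Taylor expansion of $f$ to order $\lfloor p\rfloor$ combined with \eqref{eq:property1}, the identification $\pi_k(\bx_{s,t})=\frac{1}{k!}\pi^{(\lfloor p\rfloor)}\bigl(\pi_1(\bx_{s,t})^{\circ k}\bigr)$ from Corollary~\ref{cor:truncatedDuality}, the conversion to tensor powers of $\mathbf{u}_{s,t}$ via \eqref{eq:property2}, and a telescoping estimate against $y_t-\Psi_{V,\id,y_s}(\bx_{s,t})$. The only difference is cosmetic (you compare the $k$-th components before applying $f^{(k)}(y_s)$, whereas the paper carries the multilinear forms through the whole chain), so no further comment is needed.
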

	
	\begin{proof}
		The implication $(ii)\Rightarrow(i)$ is trivial. For the other direction, the idea is to split the term $\Psi_{V,f,y_s}(\bx_{s,t})$ into terms that we can control. We have (see explanations below)
		\refstepcounter{equation}
		\label{eq:proofChainRule}		
		\begin{align}
			&\left\lVert f(y_t)-\Psi_{V,f,y_s}(\bx_{s,t})\right\rVert\nonumber\\
			\leq&\left\lVert\sum_{k=1}^{\lfloor p\rfloor}\frac{f^{(k)}(y_s)}{k!}\left[y_{s,t}^{\otimes k}-\Psi_{V,\id^k,y_s}(\pi_k(\bx_{s,t}))\right]\right\rVert+C_1\lVert y_{s,t}\rVert^{\lfloor p\rfloor+1}\tag{\theequation a}\label{eq:lineA}\\
			\leq&\left\lVert\sum_{k=1}^{\lfloor p\rfloor}\frac{f^{(k)}(y_s)}{k!}\left[y_{s,t}^{\otimes k}-\frac1{k!}\Psi_{V,\id^k,y_s}\left((\pi_1(\bx_{s,t}))^{\circ k}\right)\right]\right\rVert+C_1\lVert y_{s,t}\rVert^{\lfloor p\rfloor+1}+C_2\omega(s,t)^{\frac{\lfloor p\rfloor+1}{p}}\tag{\theequation b}\label{eq:lineB}\\
			=&\left\lVert\sum_{k=1}^{\lfloor p\rfloor}\frac{f^{(k)}(y_s)}{k!}\left[y_{s,t}^{\otimes k}-\left(\Psi_{V,\id,y_s}\left(\pi_1(\bx_{s,t})\right)\right)^{\otimes k}\right]\right\rVert+C_1\lVert y_{s,t}\rVert^{\lfloor p\rfloor+1}+C_2\omega(s,t)^{\frac{\lfloor p\rfloor+1}{p}}\tag{\theequation c}\label{eq:lineC}\\
			=&\left\lVert\sum_{k=1}^{\lfloor p\rfloor}\frac{f^{(k)}(y_s)}{k!}\left[\left(y_{s,t}-\Psi_{V,\id,y_s}\left(\pi_1(\bx_{s,t})\right)\right)\otimes\sum_{i=0}^{k-1}y_{s,t}^{\otimes (k-1-i)}\Psi_{V,\id,y_s}\left(\pi_1(\bx_{s,t})\right)^{\otimes i}\right]\right\rVert\nonumber\\
			&\qquad+C_1\lVert y_{s,t}\rVert^{\lfloor p\rfloor+1}+C_2\omega(s,t)^{\frac{\lfloor p\rfloor+1}{p}}
			\tag{\theequation d}\label{eq:lineD}\\
			=&\left\lVert\sum_{k=1}^{\lfloor p\rfloor}\frac{f^{(k)}(y_s)}{k!}\left[\left(y_t-\Psi_{V,\id,y_s}\left(\bx_{s,t}\right)\right)\otimes\sum_{i=0}^{k-1}y_{s,t}^{\otimes (k-1-i)}\Psi_{V,\id,y_s}\left(\pi_1(\bx_{s,t})\right)^{\otimes i}\right]\right\rVert\nonumber\\
			&\qquad+C_1\lVert y_{s,t}\rVert^{\lfloor p\rfloor+1}+C_2\omega(s,t)^{\frac{\lfloor p\rfloor+1}{p}}\tag{\theequation e}\label{eq:lineE}.
		\end{align}
		In this computation we have used the following arguments.
		\begin{enumerate}[(i)]
		\item[\eqref{eq:lineA}] Replace $f$ by its Taylor approximation up to degree $\lfloor p\rfloor$ and make use of \eqref{eq:property1}.
		\item[\eqref{eq:lineB}] Apply Corollary~\ref{cor:truncatedDuality}, where the remainder $\mathfrak r$ leads to the term that involves $\omega$.
		\item[\eqref{eq:lineC}] Use \eqref{eq:property2}, where we can ignore the symmetrisation because $f^{(k)}(y_s)$ is a symmetric multilinear form anyway.
		\item[\eqref{eq:lineD}] Use the elemetary identity $a^k-b^k=(a-b)(a^{k-1}+a^{k-2}b+\cdots+ab^{k-2}+b^{k-1})$, which holds in principle only for commuting variables $a$ and $b$. It can be applied nevertheless, again because $f^{(k)}(y_s)$ is symmetric.
		\item[\eqref{eq:lineE}] Observe that $\Psi_{V,\id,y_s}$ maps any tree which is neither in $\widehat\Tree_1(E)$ nor in $E^{\otimes\mathbf 1}$ to zero.
		\end{enumerate}
				
		Now define the constant $C_3$ by
		\[
			C_3:=\sup_{0\leq s<t\leq T}\sup_{a\in F:\lVert a\rVert=1}\left\lVert\sum_{k=1}^{\lfloor p\rfloor}\frac{f^{(k)}(y_s)}{k!}\left[a\otimes\sum_{i=0}^{k-1}y_{s,t}^{\otimes (k-1-i)}\Psi_{V,\id,y_s}\left(\pi_1(\bx_{s,t})\right)^{\otimes i}\right]\right\rVert.
		\]
		We have $C_3<\infty$ because $f^{(k)}(y)$ is a continuous multilinear map for all $k\in\Zpp$ and all $y\in F$. Thus \eqref{eq:lineE} can be estimated by 
		\begin{align*}
			&C_3\lVert y_t-\Psi_{V,\id,y_s}\left(\bx_{s,t}\right)\rVert+C_1\lVert y_{s,t}\rVert^{\lfloor p\rfloor+1}+C_2\omega(s,t)^{\frac{\lfloor p\rfloor+1}{p}}\\
			\leq&C_3\theta(\omega(s,t))+C_4\omega(s,t)^\frac{\lfloor p\rfloor+1}{p},
		\end{align*}
		for some $C_4>0$ where we have also used that $y$ has finite $p$-variation controlled by (a scalar multiple of) $\omega$. Defining 
		\[
			\theta_f(h):=C_3\theta(h)+C_4h^\frac{\lfloor p\rfloor+1}{p}
		\]
		finishes the proof.
	\end{proof}
	
	\begin{defn}
		\label{def:RDE}
		With the notation as in Theorem~\ref{thm:changeOfVariable} we say that $y$ solves the rough differential equation
		\begin{equation}
			\label{eq:RDE}
			\dd y_t=V(y_t)\dd\bx_t
		\end{equation}
		with initial value $y_0$ if (i) (and thus also (ii)) of Theorem~\ref{thm:changeOfVariable} holds.
	\end{defn}
	
	The idea to define solutions of rough differential equations via Euler expansions is originally due to Davie~\cite{Davie_2008} and was later developed  further in \cite{Friz_Victoir_2010}. Note that, in contrast to Lyons' original work \cite{Lyons_1998,Lyons_Caruana_Levy_2007} and Gubinelli's theory of controlled rough paths \cite{Gubinelli_2004}, this approach defines a solution as an ordinary $F$-valued path without any higher order terms. 
	
	\subsection{Geometric rough paths and differential geometry}
	Even though we have formulated Theorem~\ref{thm:changeOfVariable} only for branched rough paths it is also valid for (weakly) geometric rough path since the latter can be interpreted as branched rough paths in a canonical way. The relationship between branched and geometric rough paths has been discussed in detail in \cite{Hairer_Kelly_2015}, at least for the finite-dimensional case. The crucial difference between branched and geometric rough paths is that geometric rough paths take their values in the tensor algebra over $E$ rather than the Grossman-Larson algebra. 
	
	Let us define the tensor algebra over $E$ and see how it embeds into the Grossman-Larson algebra. For the algebraic details we refer to \cite{Lyons_Caruana_Levy_2007,Reutenauer_1993}. Recall that we have fixed a symmetric system of cross-norms on the tensor powers of $E$ in order to define $\widehat\Tree(E)$. Thus we can equip
	\[
		T(E):=\bigoplus_{k=0}^\infty V^{\hat\otimes k}
	\]
	with the product topology and denote by $\widehat T(E)$ its completion. The space $\widehat T(E)$ carries a natural product, namely the tensor product, which turns it into an algebra. Note that the sub-algebra generated by $E$ is dense in $\widehat T(E)$. Thus we can define the coproduct $\Delta_\shuffle$ on $E$ by
	\[
		\Delta_\shuffle(e):=1\otimes e+e\otimes 1
	\]
	and then extend it to all of $\widehat T(E)$ as a continuous algebra morphism.
	
	Now define \renewcommand{\treeNodeRadius}{1.5pt}$\tree=\kern-4pt\tikzTreeSmall{0}{\drawRoot\addnodeMiddle{a}{b}{}}$\renewcommand{\treeNodeRadius}{2pt} and note that $E^{\otimes\tree}\simeq E$. Hence we can define the continuous algebra morphism
	\[
		\Phi:\widehat T(E)\rightarrow\widehat\Hi_{\mathrm{GL}}(E)
	\]	
	which is given by $\Phi(e)=e\in E^{\otimes\tree}$ for $e\in E$. One easily checks that this is in fact a Hopf algebra morphism. Thus $\Phi$ lets us embed the space of geometric rough paths into the space of branched rough paths.
	
	In \cite{Hairer_Kelly_2015} it has in fact been shown that conversely, one can interpret every branched rough path as a geometric one, albeit this geometric rough path will then take values in the tensor algebra over an \emph{augmented} version of the original vector space. 
	
	Now assume that $V$ is a linear map from $E$ into the space of  vector fields on some smooth manifold $\M$. Let furthermore $y\in\M$ and let $f:\M\rightarrow G$ a smooth map into some Banach space $G$. In this case the `non-geometricity' of general branched rough paths becomes apparent in the definition of the map $\Psi_{V,f,y}$ that we have defined above. Let $e_1,\ldots,e_k\in E$. Then we have
	\[
		\Psi_{V,f,y}\left(\Phi(e_1\otimes\cdots\otimes e_k)\right)
		=
		\Psi_{V,f,y}\left(\renewcommand{\treeNodeRadius}{1.5pt}\kern-4pt\tikzTreeSmall{0}{\drawRoot\addnodeMiddle{a}{b}{e_1}}\ \star\ \cdots\ \star\kern-3pt\tikzTreeSmall{0}{\drawRoot\addnodeMiddle{a}{b}{e_k}}\renewcommand{\treeNodeRadius}{2pt}\right)
		=		
		V_{e_1}\cdots V_{e_k}f(y),
	\]
	because the $\star$-product simply encodes the product rule. Thus when we say that a rough path $\bx$ is geometric, this is because $\Psi_{V,f,y}(\Phi(\bx_{s,t}))$ is a canonically defined object for all $V,f,y,s,t$, even in a non-linear setting. In contrast, if $\bx$ is a branched rough path, then $\Psi_{V,f,y}(\bx_{s,t})$ is in general only well-defined once we choose an affine connection on $\M$ because it involves covariant derivatives of vector fields and higher order derivatives of $f$. Hence it is the independence of the choice of a connection which makes the geometric rough paths special among all branched rough paths. Differential equations driven by geometric rough paths can be interpreted as generalised Stratonovich equations whereas differential equations driven by branched rough paths should be seen as a generalisation of Ito-corrected Stratonovich equations.
	
	In light of the previous paragraph we will write $\Psi_{V,f,y,\nabla}$ in order to make clear that the values of the function on arbitrary labelled trees are to be computed with respect to the connection $\nabla$. To align this with the notation used above, we agree that $\nabla$ is the canonical connection of a vector space if it is not mentioned explicitly. Thus we generalise Definition~\ref{def:RDE} in the following way.
	\begin{defn}
		\label{def:RDE_connection}
		With the notation as in Theorem~\ref{thm:changeOfVariable} we say that $y$ solves the rough differential equation 
		\begin{equation}
			\label{eq:RDE_connection}
			\dd y_t=V(y_t)\dd\bx_t
		\end{equation}
		\emph{with respect to the connection $\nabla$} on $F$ and with initial value $y_0$ if there exists a function $\theta:\R\rightarrow\R$ with $\theta(h)/h\rightarrow0$ for $h\rightarrow0$ such that
			\begin{equation}
				\label{eq:changeOfVariableId_connection}
				\lVert y_t-\Psi_{V,\id,y_s,\nabla}(\bx_{s,t})\rVert\leq\theta(\omega(s,t))
			\end{equation}
		holds for all $(s,t)\in\Delta_T$.
	\end{defn}
	
	To conclude, let us mention two consequences of Theorem~\ref{thm:changeOfVariable} which underline its significance in the context of differential geometry. First, we have the invariance of rough differential equations under changes of coordinates as stated in the following corollary.
	
	\begin{cor}
		Let the notation be as in Theorem~\ref{thm:changeOfVariable} and assume  that $y_t\in U$ for some open set $U\subset F$ and all $t\in[0,T]$. Let $W\subset F$ be another open set and let $\varphi:U\rightarrow W$ be a diffeomorphism. For each $e\in E$ define the vector field $\tilde V_e$ on $W$ by  
		\[
			\tilde V_e(p)=\varphi'(\varphi^{-1}(p))[V_e(\varphi^{-1}(p))].
		\]
		 Then $y$ solves \eqref{eq:RDE_connection} (with respect to the canonical connection on $F$)  if and only if $\tilde y$ given by $\tilde y_t:=\varphi(y_t)$ solves 
		\[
			\dd\tilde y_t=\tilde V(\tilde y_t)\dd\bx_t,
		\]
		with respect to the pushforward connection $\tilde\nabla$ given by 
		\[
			\tilde\nabla_{\tilde Y}\tilde{X}(\tilde p):=\tilde X'(\tilde p)[\tilde Y(\tilde p)]-\varphi''(\varphi^{-1}(\tilde p))[(\varphi'(\varphi^{-1}(\tilde p)))^{-1}[\tilde X(\tilde p)],(\varphi'(\varphi^{-1}(\tilde p)))^{-1}[\tilde Y(\tilde p)]]
		\]
		for vector fields $\tilde X,\tilde Y$ on $F$ and $\tilde p\in W$.
	\end{cor}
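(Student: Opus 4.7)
The plan is to combine the equivalence in Theorem~\ref{thm:changeOfVariable} with a tree-by-tree change-of-variables identity for the map $\Psi$. Since $y$ solves \eqref{eq:RDE_connection} with respect to the canonical connection on $F$, part (i) of Theorem~\ref{thm:changeOfVariable} holds, and hence so does part (ii); applying (ii) to the test function $f=\varphi$ produces a remainder $\theta_{\varphi}$ with $\theta_{\varphi}(h)/h\to 0$ such that
\[
\left\lVert \varphi(y_t) - \Psi_{V,\varphi,y_s,\nabla^{\mathrm{can}}}(\bx_{s,t}) \right\rVert \leq \theta_{\varphi}(\omega(s,t))
\]
for all $(s,t)\in\Delta_T$. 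Because $\tilde y_t=\varphi(y_t)$, the Davie-type condition \eqref{eq:changeOfVariableId_connection} for $\tilde y$ with driver $\tilde V$ and connection $\tilde\nabla$ reduces to establishing the identity
\begin{equation}\label{eq:pf_tree_id}
\Psi_{V,\varphi,y,\nabla^{\mathrm{can}}}(\mathbf t)\;=\;\Psi_{\tilde V,\mathrm{id},\varphi(y),\tilde\nabla}(\mathbf t)\qquad\text{for every }\mathbf t\in\widehat\Tree(E),
\end{equation}
which is then applied to $\mathbf t=\bx_{s,t}$ and extended by continuity.

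I would prove \eqref{eq:pf_tree_id} by induction on $\lvert\mathbf t\rvert$, exploiting the recursive description of $\Psi$: if the root of $\mathbf t$ has $k$ children heading subtrees $\mathbf t_1,\dots,\mathbf t_k$, then $\Psi$ contributes the $k$-th derivative of the root function (here $\varphi$ or $\mathrm{id}$) applied, symmetrically over $\Sym(\mathbf t)$, to the vector fields coming from the subtrees. On the left side this root derivative is the ordinary iterated Fr\'echet derivative $\varphi^{(k)}(y)$; on the right side it is the $k$-th $\tilde\nabla$-covariant derivative of $\mathrm{id}$ at $\varphi(y)$. The crucial observation is that the definition of $\tilde\nabla$ is engineered exactly so that $\tilde\nabla^k\mathrm{id}$ at $\varphi(y)$, acting on pushed-forward vectors, reproduces $\varphi^{(k)}(y)$ acting on the corresponding vectors on $U$. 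This is transparent already for the cherry tree: a direct computation from the displayed formula for $\tilde\nabla$ gives $\tilde\nabla^{2}\mathrm{id}(\varphi(y))[\tilde V_{e_1},\tilde V_{e_2}] = \varphi''(y)[V_{e_1}(y),V_{e_2}(y)]$, which matches $\Psi_{V,\varphi,y,\nabla^{\mathrm{can}}}$ on the cherry because $\varphi''$ is symmetric. The induction step combines this observation with the relation $\tilde V_e(\varphi(y))=\varphi'(y)[V_e(y)]$ and with the defining property of the pushforward connection, namely that iterated $\tilde\nabla$-covariant derivatives of the pushed-forward vector field correspond, under $\varphi'$, to iterated flat derivatives on $U$.

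The main obstacle is precisely this combinatorial-differential-geometric verification of \eqref{eq:pf_tree_id}. The correction term $-\varphi''(\cdot)[(\varphi')^{-1}\cdot,(\varphi')^{-1}\cdot]$ in the definition of $\tilde\nabla$ absorbs exactly the lower-order Hessian-of-$\varphi$ contributions that arise whenever the chain rule is applied inside $\varphi^{(k)}(y)[\dots]$, and one has to check that the telescoping of these corrections over all the internal vertices of an arbitrarily-shaped tree reproduces the raw iterated Fr\'echet derivative on the left of \eqref{eq:pf_tree_id}. In the geometric subcase this is essentially Strichartz's formula; for branched drivers the same reduction has to be carried out vertex by vertex along the tree, keeping track of the symmetrisation by $\Sym(\mathbf t)$ coming from \eqref{eq:TreePairing}. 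The converse implication follows by applying the whole argument to the diffeomorphism $\varphi^{-1}:W\to U$, once it is checked directly from the given formula that $(\varphi^{-1})_{*}\tilde\nabla=\nabla^{\mathrm{can}}$, i.e. that the pushforward-of-connection construction is involutive.
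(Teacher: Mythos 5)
Your proposal is correct and follows essentially the same route as the paper: both reduce the claim to the tree-level identity $\Psi_{V,\varphi,y,\nabla}(\mathbf t)=\Psi_{\tilde V,\id,\varphi(y),\tilde\nabla}(\mathbf t)$ and then invoke Theorem~\ref{thm:changeOfVariable} with the test function $f=\varphi$. The paper simply asserts this identity as ``writing everything in the original coordinates,'' whereas you sketch the vertex-by-vertex induction (and the involutivity check needed for the converse) explicitly; that is additional detail on the same argument, not a different approach.
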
	
	\begin{proof}
		Writing $\varphi$ itself in the new coordinates on $W$ gives $\tilde\varphi(\tilde p)=\varphi(\varphi^{-1}(\tilde p))=\tilde p$, i.e. $\tilde\varphi=\id$. Thus we have
		\[
			\tilde y_t-\Psi_{\tilde V, \id, \tilde y_s, \tilde \nabla}
				(\bx_{s,t})
				=
				\varphi(y_t)-\Psi_{\tilde V, \tilde\varphi, \tilde y_s, \tilde\nabla}(\bx_{s,t})
				=\varphi(y_t)-\Psi_{V, \varphi, y_s}(\bx_{s,t}),
		\]
		where the last equality is obtained by writing everything in the original coordinates. The claim follows now immediately from Theorem~\ref{thm:changeOfVariable}.
	\end{proof}
	A careful reading of \cite{Cass_Driver_Litterer_2015} shows that the authors have used arguments similar to those that went into the proof of Theorem~\ref{thm:changeOfVariable} in order to obtain well-definedness of RDEs on submanifolds of $\Rd$, albeit in the less general case $p<3$.
		
	Second, Theorem~\ref{thm:changeOfVariable} yields a way to \emph{define} what we mean by the solution of an RDE on a manifold by testing \eqref{eq:changeOfVariableF} for a suitable set of real-valued test functions. If the manifold is finite-dimensional, then smooth compactly supported test functions are an obvious choice. Even though this definition has been suggested before -- see e.g. \cite{Angst_Bailleul_Tardif_2015} and the references therein -- we believe that Theorem~\ref{thm:changeOfVariable} is so far the most general way to establish that such a definition is compatible with Definitions~\ref{def:RDE} and \ref{def:RDE_connection}. 

	\bibliography{../../bibtex/bib_final}
	\bibliographystyle{amsplain}
\end{document}